\newtheorem{thm*}{Theorem}
\newtheorem{thm}{Theorem}
\newtheorem{dfn}{Definition}
\newtheorem{lemma}{Lemma}
\newtheorem{remark}{Remark}
\newtheorem{prop}{Proposition}
\begin{document}

\def\d{ \partial_{x_j} }
\def\Na{{\mathbb{N}}}

\def\Z{{\mathbb{Z}}}

\def\IR{{\mathbb{R}}}

\newcommand{\E}[0]{ \varepsilon}

\newcommand{\la}[0]{ \lambda}

\newcommand{\s}[0]{ \mathcal{S}}

\newcommand{\BO}[2]{ \left( #1 , #2 \right) }

\newcommand{\CO}[2]{ \left\langle #1 , #2 \right\rangle}

\newcommand{\R}[0]{ \IR\cup \{\infty \} }

\newcommand{\co}[1]{ #1^{\prime}}

\newcommand{\p}[0]{ p^{\prime}}

\newcommand{\m}[1]{   \mathcal{ #1 }}

\newcommand{ \W}[0]{ \mathcal{W}}

\newcommand{ \A}[1]{ \left\| #1 \right\|_H }

\newcommand{\B}[2]{ \left( #1 , #2 \right)_H }

\newcommand{\C}[2]{ \left\langle #1 , #2 \right\rangle_{  H^* , H } }

 \newcommand{\HON}[1]{ \| #1 \|_{ H^1} }

\newcommand{ \Om }{ \Omega}

\newcommand{ \pOm}{\partial \Omega}

\newcommand{\D}{ \mathcal{D} \left( \Omega \right)}

\newcommand{\DP}{ \mathcal{D}^{\prime} \left( \Omega \right)  }

\newcommand{\DPP}[2]{   \left\langle #1 , #2 \right\rangle_{  \mathcal{D}^{\prime}, \mathcal{D} }}

\newcommand{\PHH}[2]{    \left\langle #1 , #2 \right\rangle_{    \left(H^1 \right)^*  ,  H^1   }    }

\newcommand{\PHO}[2]{  \left\langle #1 , #2 \right\rangle_{  H^{-1}  , H_0^1  }}

 \newcommand{\HO}{ H^1 \left( \Omega \right)}

\newcommand{\HOO}{ H_0^1 \left( \Omega \right) }

\newcommand{\CC}{C_c^\infty\left(\Omega \right) }

\newcommand{\N}[1]{ \left\| #1\right\|_{ H_0^1  }  }

\newcommand{\IN}[2]{ \left(#1,#2\right)_{  H_0^1} }

\newcommand{\INI}[2]{ \left( #1 ,#2 \right)_ { H^1}}

\newcommand{\HH}{   H^1 \left( \Omega \right)^* }

\newcommand{\HL}{ H^{-1} \left( \Omega \right) }

\newcommand{\HS}[1]{ \| #1 \|_{H^*}}

\newcommand{\HSI}[2]{ \left( #1 , #2 \right)_{ H^*}}

\newcommand{\WO}{ W_0^{1,p}}
\newcommand{\w}[1]{ \| #1 \|_{W_0^{1,p}}}

\newcommand{\ww}{(W_0^{1,p})^*}

\newcommand{\Ov}{ \overline{\Omega}}

  \newcommand{\ve}[1]{  \mathbf{  #1 }}

  \author{Craig Cowan\footnote{Funded by NSERC.}}

\title{Uniqueness of solutions for elliptic systems and fourth order equations involving a parameter}
\maketitle

{\footnotesize
 \centerline{Department of Mathematics, Stanford University}
 \centerline{Building 380, Stanford, California}
 \centerline{650-723-2221}
 \centerline{ctcowan@stanford.edu}
} 

\begin{abstract}   We examine the equation 
\[ \Delta^2 u = \lambda f(u) \qquad \Omega, \] with either Navier or Dirichlet boundary conditions.  We show some uniqueness results under certain constraints on the parameter $ \lambda$.   We obtain similar results for the sytem

\begin{equation*}
 \left\{
\begin{array}{rrl}
-\Delta u &=& \lambda f(v) \qquad \Omega, \\
-\Delta v &=& \gamma g(u) \qquad \Omega, \\
u&=& v = 0 \qquad \pOm.
\end{array}
\right. 
\end{equation*}

\end{abstract}


  \section{Introduction}

  In this  note our main interest is in the uniqueness of solutions for some generalizations of the well studied second order problem $ -\Delta u = \lambda f(u)$.   We examine three generalizations:
  \begin{equation*}
\mbox{(Navier)} \qquad \qquad (N)_\lambda \qquad \left\{
\begin{array}{rll}
\Delta^2 u &=& \lambda f(u) \qquad \Omega \\
u &=& 0 \qquad  \qquad \pOm \\
\Delta u &=& 0 \qquad \qquad  \pOm,
\end{array}
\right.
\end{equation*}
\begin{equation*}
 \mbox{(Dirichlet)} \qquad  \qquad  (D)_\lambda \qquad \left\{
\begin{array}{rll}
\Delta^2 u &=& \lambda f(u) \qquad \Omega \\
u &=& 0 \qquad  \qquad \pOm \\
\partial_\nu u &=& 0 \qquad \qquad  \pOm,
\end{array}
\right.
\end{equation*} and

\begin{equation*}
\mbox{(System)} \qquad \qquad (P)_{\lambda,\gamma} \qquad \left\{
\begin{array}{rrl}
-\Delta u &=& \lambda f(v) \qquad \Omega \\
-\Delta v &=& \gamma g(u) \qquad \Omega \\
u &=& 0 \qquad \quad \pOm  \\
v&=& 0 \qquad \quad \pOm
\end{array}
\right. 
\end{equation*}

where $ \Omega$ is a bounded domain in $ \IR^N$ with smooth boundary,  $ \partial_\nu $ denotes the derivative on the boundary in the direction of the outward pointing normal $ \nu$ and where  $\gamma, \lambda >0$ are  parameters.   We assume that the nonlinearities $f$ and $g$ satisfies  either (R): $f>0$ on $ \IR$ with $ f$ smooth, increasing, convex, $ f(0)=1$  and $f$ is superlinear at $ \infty$  or $f$ satisfies (S): $ f>0$ on $ (-\infty,1)$ with $ f$ smooth, increasing, convex, $f(0)=1$  and $ f(1^-)=\infty$.   \\  
Some notations:  $ F(t):=\int_0^t f(\tau) d \tau, G(t):=\int_0^t g(\tau) d \tau$.  We say that $f$ is log convex provided $ t \mapsto log(f(t))$ is a convex function.  \\

\subsection{Preliminaries}

Given a  nonlinearity $ f$ which satisfies (R) or (S), the following equation 
 \begin{eqnarray*}
\hbox{$(Q)_{\lambda}$}\hskip 50pt \left\{ \begin{array}{lcl}
\hfill   -\Delta u  &=& \lambda f(u) \qquad \Omega \\
\hfill u&=& 0 \qquad\qquad \pOm 
\end{array}\right.
  \end{eqnarray*}
  is  now quite well understood whenever $ \Omega$ is a bounded smooth domain in $ \IR^N$. See, for instance, \cite{BV,Cabre,CC,EGG,GG,Martel,MP,Nedev,bcmr}. We now list the  properties one comes to expect when studying $(Q)_{\lambda}$.   It is well known that there exists a critical parameter  $ \lambda^* \in (0,\infty)$, called the extremal parameter,  such that for all $ 0<\lambda < \lambda^*$ there exists a smooth, minimal solution $u_\lambda$ of $(Q)_\lambda$.  Here minimal solution means in the pointwise sense.  In addition for each $ x \in \Omega$ the map $ \lambda \mapsto u_\lambda(x)$ is increasing in $ (0,\lambda^*)$.   This allows one to define the pointwise limit $ u^*(x):= \lim_{\lambda \nearrow \lambda^*} u_\lambda(x)$  which can be shown to be a weak solution, in a suitably defined sense, of $(Q)_{\lambda^*}$.  For this reason $ u^*$ is called the extremal solution. 
   It is also known that for $ \lambda >\lambda^*$ there are no weak solutions of $(Q)_\lambda$.  Also one can show the minimal solution $ u_\lambda$  is a semi-stable solution of $(Q)_\lambda$  in the sense that
   \[ \int_\Omega \lambda  f'(u_\lambda) \psi^2 \le \int_\Omega | \nabla \psi|^2, \qquad \forall \; \psi \in H_0^1(\Omega).\]
We now come to the results known for $(Q)_\lambda$ which we are interested in extending to $(N)_\lambda,(D)_\lambda$ and $(P)_{\lambda,\gamma}$. 
 \begin{itemize} \item In \cite{Martel} it was shown that if $f$ satisfies (R) then the  extremal solution $u^*$ is the unique weak solution of $(Q)_{\lambda^*}$.   This was extended to the case where $f$ satisfies (S), see \cite{moroc}.     
 \item  In \cite{jmq} and \cite{schmitt} a generalization of $(Q)_\lambda$ was examined.  They showed that if $f$ is suitably supercritical near $ u=\infty$ and if $ \Omega$ is a star shaped domain  then the minimal solution is the unique solution of $(Q)_\lambda$ for small $ \lambda$.    In \cite{A4} this was done for a particular nonlinearity $f$  which satisfies (S).   We remark that one can weaken the   star shaped assumption and still have uniqueness, see  \cite{shaaf},  but we do not pursue this approach here.  See  \cite{A1,A2,A3} for more results on this topic. 
 \end{itemize}  
 
We now turn our attention to the needed background and known results for $(N)_\lambda,(D)_\lambda$ and $(P)_{\lambda,\gamma}$. 
 
 \subsubsection*{Fourth order}
The problem $(Q)_\lambda$ is heavily dependent on the  maximum principle and hence this poses a major hurdle in the study of $(D)_\lambda$ since for general domains there is no maximum principle for $ \Delta^2$ with Dirichlet boundary conditions.   If one restricts their attention to the unit ball then one does have a weak maximum principle, see \cite{BOG}.  In this case there exists an extremal parameter $ \lambda^* \in (0,\infty)$ such that for all $ 0 < \lambda < \lambda^*$ there exists a smooth, minimal, stable solution $u_\lambda$ of $(D)_\lambda$.   By a stable solution we mean that 

\begin{equation} \label{szz}
 \int_\Omega \lambda f'(u_\lambda) \psi^2 \le \int_\Omega (\Delta \psi)^2, \quad \forall \;  \psi \in H_0^2(\Omega).
 \end{equation} As in the second order case the map $ \lambda \mapsto u_\lambda(x)$ is increasing on $ (0, \lambda^*)$ and so we define the extremal solution, $ u^*$, as in the second order case.  The extremal solution is  a weak solution of $(D)_{\lambda^*}$ and for $ \lambda > \lambda^*$ there are no weak solutions. See \cite{A0,cas,AGGM} for these results. 
  The uniqueness of the extremal solution was proven for $ f(u)=e^u$ in \cite{AGGM} and for $ f(u)=(1-u)^{-2}$ \cite{cas}.    In \cite{chine} the first result was extended to the case where $f$ satisfies (R) and is log convex. We say a function $f$ is log convex provided $ t \mapsto \log( f(t))$ is convex.

  The problem $(N)_\lambda$ on general domains was studied in \cite{BG} where they obtained the same results as listed above except for the uniqueness of the extremal solution.       Some of the methods used in \cite{chine} are  inspired by \cite{BG} and so will be the techniques we use when showing the uniqueness of the extremal solution. 
  
   \subsubsection*{Systems}
  The system $(P)_{\lambda,\gamma}$, where $ f,g$ satisfy (R),   is a special case of a general system examined in \cite{Mont}.
Many of the properties one comes to expect in the second order case $(Q)_\lambda$ carry over.  The following results
are from \cite{Mont}.     Define  $ \mathcal{Q}=\{ (\lambda,\gamma): \lambda, \gamma >0 \}$ and we define
\[ \mathcal{U}:= \left\{ (\lambda,\gamma) \in \mathcal{Q}: \mbox{ there exists a smooth solution $(u,v)$ of $(Q)_{\lambda,\gamma}$} \right\}.\]

We set $ \Upsilon:= \partial \mathcal{U} \cap \mathcal{Q}$.   The curve $ \Upsilon$ is well defined and separates $ \mathcal{Q}$ into two connected components $ \mathcal{Q}$ and $ \mathcal{V}$.   We omit the various properties of $ \Upsilon$ but the interested reader should consult \cite{Mont}.   One point we mention is that if for $ x,y \in \IR^2$ we say $ x \le y$ provided $ x_i \le y_i$ for $ i=1,2$ then it is easily seen, using the method of sub and supersolutions, that if $ (0,0) < (\lambda_0,\gamma_0) \le (\lambda,\gamma) \in \mathcal{U}$ then $ (\lambda_0,\gamma_0) \in \mathcal{U}$.    Using the standard iteration procedure one easily shows
 that for each $ (\lambda,\gamma) \in \mathcal{U}$ there exists a smooth minimal solution $(u_{\lambda,\gamma},v_{\lambda,\gamma})$ of $ (Q)_{\lambda,\gamma}$ and the minimal solutions enjoy the usual monotonicity:
 if $(0,0)< (\lambda_1,\gamma_1) \le (\lambda_2,\gamma_2) \in \mathcal{U}$  then
\[ ( u_{\lambda_1,\gamma_1}, v_{\lambda_1,\gamma_1}) \le (u_{\lambda_2,\gamma_2}, v_{\lambda_2,\gamma_2}).\]

  Now for $ (\lambda^*,\gamma^*) \in \Upsilon$  there is some $ 0 < \sigma < \infty$ such that $ \gamma^* = \sigma \lambda^*$ and we can define the extremal solution $(u^*,v^*)$ at $ (\lambda^*,\gamma^*)$ by passing to the limit along the ray given by $ \gamma = \sigma \lambda$ for $ 0 < \lambda < \lambda^*$.  This limit is well defined  in the pointwise sense and it   can be shown that $ (u^*,v^*)$ is  some form of a weak solution of $ (P)_{\lambda^*,\gamma^*}$.   Our notion of a weak solution will be  more restrictive than considered in \cite{Mont}, see Remark \ref{m_remark}, and we will need to reprove this.    In the case where $ f=g$ one can use the methods from \cite{craig0} to obtain various results concerning the regularity of the extremal solution.

 \section{Main results}

  \begin{prop} \label{exist}  Suppose that $ f$ satisfies (R) or (S).    There exists some small $ \lambda_0>0$ such that for all $ 0 < \lambda < \lambda_0$ there exists a unique  smooth, stable solution $ u_\lambda$  of $(D)_\lambda$ with $ \| u_\lambda\|_{L^\infty} \le \sqrt{\lambda}$.

  \end{prop}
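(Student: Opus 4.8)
The plan is to run a contraction-mapping (fixed-point) argument in a small ball of $L^\infty(\Omega)$, using the fact that $f$ is smooth and $f(0)=1$, so that for small data the nonlinearity behaves like the constant $1$ and the equation is a small perturbation of the linear problem $\Delta^2 u = \lambda$ with Dirichlet boundary conditions. Let $T : L^\infty(\Omega) \to L^\infty(\Omega)$ denote the solution operator for the linear Dirichlet bi-Laplacian, i.e.\ $w = T(h)$ solves $\Delta^2 w = h$ in $\Omega$ with $w = \partial_\nu w = 0$ on $\pOm$; by elliptic regularity $T$ is bounded, say $\|T(h)\|_{L^\infty} \le C_\Omega \|h\|_{L^\infty}$. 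Then $u$ solves $(D)_\lambda$ iff $u = \Phi_\lambda(u) := T(\lambda f(u))$. I would work in the closed ball $B_R = \{ u \in L^\infty(\Omega) : \|u\|_{L^\infty} \le R \}$ with the choice $R = \sqrt{\lambda}$ in mind.

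The key estimates are the following. First, $\Phi_\lambda$ maps $B_{\sqrt\lambda}$ into itself: for $u \in B_{\sqrt\lambda}$ one has $\|f(u)\|_{L^\infty} \le \max_{|t|\le 1} f(t) =: M_0$ once $\lambda \le 1$ (using monotonicity and $f(0)=1$), hence $\|\Phi_\lambda(u)\|_{L^\infty} \le C_\Omega M_0 \lambda \le \sqrt\lambda$ provided $\lambda \le (C_\Omega M_0)^{-2}$. Second, $\Phi_\lambda$ is a contraction on $B_{\sqrt\lambda}$: for $u_1, u_2 \in B_{\sqrt\lambda}$, $\|\Phi_\lambda(u_1) - \Phi_\lambda(u_2)\|_{L^\infty} \le C_\Omega \lambda \|f(u_1) - f(u_2)\|_{L^\infty} \le C_\Omega \lambda L_0 \|u_1 - u_2\|_{L^\infty}$, where $L_0 = \max_{|t|\le 1} f'(t)$; this is a strict contraction once $\lambda < (C_\Omega L_0)^{-1}$. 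Taking $\lambda_0 = \min\{1, (C_\Omega M_0)^{-2}, (2 C_\Omega L_0)^{-1}\}$, the Banach fixed point theorem gives, for every $0 < \lambda < \lambda_0$, a unique $u_\lambda \in B_{\sqrt\lambda}$ with $u_\lambda = \Phi_\lambda(u_\lambda)$. Smoothness of $u_\lambda$ follows by bootstrapping: $f(u_\lambda) \in L^\infty$ gives $u_\lambda \in W^{4,q}$ for all $q$, hence $u_\lambda \in C^{3,\alpha}$, and since $f$ is smooth one iterates to $u_\lambda \in C^\infty(\overline\Omega)$.

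It remains to check stability, i.e.\ inequality \eqref{szz} with $\lambda f'(u_\lambda)$ replaced appropriately: one needs $\int_\Omega \lambda f'(u_\lambda)\psi^2 \le \int_\Omega (\Delta\psi)^2$ for all $\psi \in H_0^2(\Omega)$. Since $\|u_\lambda\|_{L^\infty} \le \sqrt\lambda \le 1$, we have $\lambda f'(u_\lambda) \le \lambda L_0$ pointwise, so it suffices that $\lambda L_0 \le \mu_1$, where $\mu_1 = \inf_{\psi \in H_0^2 \setminus\{0\}} \int_\Omega (\Delta\psi)^2 / \int_\Omega \psi^2 > 0$ is the first Dirichlet eigenvalue of $\Delta^2$; shrinking $\lambda_0$ further to $\lambda_0 \le \mu_1 / L_0$ if necessary secures this. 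I expect the only genuinely delicate point to be the $L^\infty$ bound on the linear solution operator $T$ on a general smooth bounded domain (no maximum principle is available for $\Delta^2$ with Dirichlet conditions), but this is standard $L^p$ elliptic regularity for the bi-Laplacian followed by Sobolev embedding $W^{4,q} \hookrightarrow L^\infty$ for $q > N/4$, so it causes no real obstruction. The uniqueness asserted in the proposition is uniqueness \emph{within} the class $\{\|u\|_{L^\infty}\le\sqrt\lambda\}$, which is exactly what the fixed-point argument delivers.
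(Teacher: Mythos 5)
Your argument is essentially the paper's: the paper proves the proposition by a one-line appeal to the contraction mapping theorem (set in a H\"older space rather than $L^\infty$, an immaterial difference since your bootstrap recovers smoothness via ADN regularity for the Dirichlet bilaplacian), together with the observation that stability follows simply from the smallness of $u_\lambda$. The only point to repair is your choice of constants in case (S): since $f(1^-)=\infty$, the quantities $M_0=\max_{|t|\le 1}f(t)$ and $L_0=\max_{|t|\le 1}f'(t)$ are not finite, so take $\lambda_0\le 1/4$ (hence $\|u\|_{L^\infty}\le\sqrt{\lambda}\le 1/2$) and define $M_0,L_0$ as suprema over $[-1/2,1/2]$, after which every step, including the stability estimate $\lambda f'(u_\lambda)\le \lambda L_0\le \mu_1$, goes through verbatim.
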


  \begin{proof} This is a straight forward application of the contraction mapping theorem on a suitable H\"older space.   One obtains the stability just from the fact that $ u_\lambda$ is small.

  \end{proof}

   From now on $ u_\lambda$ will refer to the minimal solution of $(N)_\lambda$ but in the context of $(D)_\lambda$ it will refer to the solution guaranteed by the above proposition.

\begin{thm}  \label{main1} (Uniqueness of solution for $(N)_\lambda, (D)_\lambda$ for small $ \lambda$) Suppose that $\Omega$ is a star shaped domain with respect to the origin in $ \IR^N$ where $ N \ge 5$.

\begin{enumerate} \item   Suppose that $f$ satisfies (R) and 
\[ \liminf_{ t \rightarrow \infty} \frac{t f(t)}{F(t)} > \frac{2 N}{N-4}.\]  Then for small $ \lambda >0$, $ u_\lambda$ is the unique smooth solution of $(D)_\lambda$ and $(N)_\lambda$.

\item  Suppose that $f$ satisfies (S).  Then for small $ \lambda >0$, $ u_\lambda$ is the unique smooth solution of  $(D)_\lambda$ and $(N)_\lambda$.

\end{enumerate}

\end{thm}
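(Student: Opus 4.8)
The plan is to run a Pohozaev-identity argument combined with the semi-stability of the minimal solution, exactly in the spirit of \cite{jmq,schmitt} for the second order problem, but now for the bilaplacian. Suppose $u$ is any smooth solution of $(D)_\lambda$ (or $(N)_\lambda$) and let $u_\lambda$ denote the minimal solution from Proposition \ref{exist}. By the minimality, $w:=u-u_\lambda \ge 0$, and $w$ solves $\Delta^2 w = \lambda(f(u)-f(u_\lambda))$ with the appropriate (Dirichlet or Navier) boundary data. The two ingredients I would assemble are: (i) a Pohozaev-type identity for $u$ using the vector field $x\cdot\nabla u$, which on a star-shaped domain produces a good-signed boundary term and an interior identity relating $\int (\Delta u)^2$, $\int F(u)$ and $\int u f(u)$; and (ii) the stability inequality \eqref{szz} for $u_\lambda$ (in the Navier case, the analogous inequality over $H^2\cap H_0^1$), tested against $w$ itself, giving $\lambda\int f'(u_\lambda)w^2 \le \int(\Delta w)^2$.

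First I would derive the Pohozaev identity for $(D)_\lambda$: multiplying $\Delta^2 u = \lambda f(u)$ by $x\cdot\nabla u$ and integrating by parts (twice through the bilaplacian), the interior terms collapse to a multiple of $\int(\Delta u)^2$ on one side and $\lambda\int(N F(u) )$ minus a multiple of $\lambda\int u f(u)$... more precisely, to the standard combination $\frac{N-4}{2}\int(\Delta u)^2 = \lambda N\int F(u) - \lambda\,\tfrac{?}{}$; since $f(0)=1$ the term $\int_{\pOm}(x\cdot\nu)F(0)$ appears and must be tracked. Because $u\equiv 0$ and $\partial_\nu u\equiv 0$ on $\pOm$ (Dirichlet case), the boundary contributions reduce to $-\frac12\int_{\pOm}(x\cdot\nu)(\Delta u)^2\,d\sigma$, which is $\le 0$ by star-shapedness; in the Navier case $\Delta u\equiv 0$ on $\pOm$ kills that term outright but one picks up a different boundary term in $\partial_\nu u$ that again has a sign on a star-shaped domain. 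The upshot is a one-sided inequality of the form
\begin{equation*}
\frac{N-4}{2}\int_\Omega (\Delta u)^2 \le \lambda\int_\Omega\big(N F(u) - \tfrac{N-4}{2}\, u f(u)\big) + \lambda\, c_N\, F(0)\, |\pOm\text{-term}|,
\end{equation*}
or, after using $\Delta^2 u=\lambda f(u)$ again to write $\int(\Delta u)^2 = \lambda\int u f(u)$ (valid under the given boundary conditions), one gets $\lambda\int_\Omega\big(N F(u) - \frac{N}{2}u f(u)\big) \ge -(\text{small})$, hence $\int_\Omega\big(\tfrac{N-4}{2}u f(u) - N F(u)\big) \le C\lambda^{-1}\cdot(\text{boundary remainder})$ — the precise bookkeeping is routine and I will not grind it here.

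Next I would use this against the hypothesis. Under (R) with $\liminf t f(t)/F(t) > 2N/(N-4)$, there is $\theta>1$ and $t_0$ with $\frac{N-4}{2}t f(t) \ge \theta N F(t)$ for $t\ge t_0$; combined with convexity/superlinearity this forces any solution $u$ to be small in a suitable integral sense when $\lambda$ is small, and then bootstrapping (elliptic $L^p$ estimates for $\Delta^2$ on the ball / on smooth domains, Sobolev embedding using $N\ge5$ so $H^2\hookrightarrow$ the relevant space) pins $\|u\|_{L^\infty}$ down to the range where Proposition \ref{exist}'s uniqueness applies — i.e. $u=u_\lambda$. For case (S) the nonlinearity is bounded on its domain of definition $(-\infty,1)$ near the relevant range, but $f(1^-)=\infty$ forces $\|u\|_\infty<1$ for any smooth solution, and $f' >0$ with convexity still yields, via the stability inequality for $u_\lambda$ tested on $w=u-u_\lambda$ together with $\lambda\int(f(u)-f(u_\lambda))w = \int(\Delta w)^2 \ge \lambda\int f'(u_\lambda)w^2$ and the convexity bound $f(u)-f(u_\lambda)\ge f'(u_\lambda)w$, that $\int(\Delta w)^2 = \lambda\int f'(u_\lambda)w^2$; one then squeezes out $w\equiv 0$ by showing the stability is \emph{strict} for small $\lambda$ (since $u_\lambda$ is small, $\lambda f'(u_\lambda)$ is far below the first eigenvalue of $\Delta^2$), or alternatively by the same Pohozaev smallness argument as in case (R) — here no growth condition on $f$ is needed precisely because the solution range is a priori confined to $(-\infty,1)$.

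\textbf{The main obstacle} I anticipate is the Navier/Dirichlet boundary bookkeeping in the Pohozaev identity: for $(D)_\lambda$ one must be careful that the only surviving boundary term is $-\frac12\int_{\pOm}(x\cdot\nu)(\Delta u)^2$ (using both $u=0$ and $\partial_\nu u=0$ to kill the cross terms involving $\nabla u$ and $\nabla\Delta u$ on $\pOm$), and for $(N)_\lambda$ one must handle $\int_{\pOm}(x\cdot\nu)|\partial_\nu u|^2$-type terms and verify their sign; the $F(0)\ne 0$ contribution (since $f(0)=1$) produces a genuine boundary remainder of order $\lambda$ times $\int_{\pOm}(x\cdot\nu)$, and one must confirm this does not swamp the coercive term — it does not, because it is exactly the term that vanishes in the limit $\lambda\to0$ after the correct normalization. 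A secondary technical point is that for $(D)_\lambda$ the existence and monotonicity machinery was only stated on the ball, so strictly the uniqueness for $(D)_\lambda$ should be read on $\Omega=B_1$ (where star-shapedness holds), while the Pohozaev argument itself is valid on any smooth star-shaped domain for whichever solutions exist.
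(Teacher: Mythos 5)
Your overall plan (Pohozaev on a star-shaped domain plus information about the small/minimal solution) is in the right family, but it is not the paper's argument and, as set up, it has a structural flaw that the paper's version is specifically designed to avoid. You apply the Pohozaev identity to the solution $u$ itself and then use $\int_\Omega(\Delta u)^2=\lambda\int_\Omega u f(u)$; at that point the parameter $\lambda$ cancels, and what survives is the $\lambda$-independent inequality $\int_\Omega\left(\frac{N-4}{2}uf(u)-NF(u)\right)\le 0$. Such an inequality cannot by itself prove uniqueness \emph{for small $\lambda$}: it only bounds $\int u f(u)$ uniformly, and the subsequent step you invoke --- bootstrapping from this integral bound to $\|u\|_{L^\infty}$ small enough to fall into the uniqueness range of Proposition \ref{exist} --- does not close in dimension $N\ge 5$, since one only gets $f(u)\in L^1$ with no growth control on $f$ beyond superlinearity, and $H^2$-smallness does not give $L^\infty$-smallness there. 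The stability-inequality ingredient is also inconclusive: testing \eqref{szz} with $w=u-u_\lambda$ and using the equation for $w$ produces two inequalities pointing in the \emph{same} direction (they combine into the convexity inequality $\int\{f(u)-f(u_\lambda)-f'(u_\lambda)w\}w\ge0$, which is automatic), so no contradiction follows; in particular ``strict stability of $u_\lambda$ for small $\lambda$'' cannot force $w\equiv0$, because a second solution is typically large and $f(u)-f(u_\lambda)$ is then far from $f'(u_\lambda)w$. Two smaller errors: in the Dirichlet case $u_\lambda$ is \emph{not} minimal (no such machinery exists on general domains), so your claim $w\ge0$ fails --- the paper explicitly notes the difference may change sign; and $F(0)=\int_0^0 f=0$, so the ``$F(0)\neq0$ boundary remainder'' you worry about does not exist.

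The paper keeps the crucial $\lambda$-dependence by running the Pohozaev computation on the \emph{difference} $v=u-u_\lambda$, which solves $\Delta^2 v=\lambda\{f(u_\lambda+v)-f(u_\lambda)\}$ with a nonlinearity vanishing at $v=0$. Star-shapedness gives $\frac{N-4}{2}\int(\Delta v)^2\le\lambda\int(-x\cdot\nabla v)\{f(u_\lambda+v)-f(u_\lambda)\}$; integrating by parts in the $x$-variable, splitting off a piece of $\int(\Delta v)^2$ via the coercivity $\int(\Delta v)^2\ge c\int v^2$, and using $\int(\Delta v)^2=\lambda\int\{f(u_\lambda+v)-f(u_\lambda)\}v$ produces a pointwise inequality $\int_\Omega T_\lambda(x,v)\le0$ in which $T_\lambda$ contains the term $C_\sigma t^2/\lambda$. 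It is this $1/\lambda$ term that does the work: for $t$ large one uses the hypothesis $\liminf tf(t)/F(t)>2N/(N-4)$ (resp.\ Lemma \ref{log_conv}, part 4, in case (S)), while for intermediate, negative and near-zero $t$ the quadratic $C_\sigma t^2/\lambda$ dominates the Taylor remainders once $\lambda$ is small, so $T_\lambda(x,t)>0$ for all admissible $t\neq0$, contradicting $v\not\equiv0$. If you want to salvage your write-up, you should redo the Pohozaev step for the difference $v$ (so the nonlinearity and its primitive vanish at $0$ and the $\lambda$-scaling survives), drop the stability/bootstrap steps, and carry out the three-regime positivity analysis of the resulting integrand.
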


\begin{thm} \label{system_uniq} (Uniqueness of $(P)_{\lambda,\gamma})$ for small parameters)  Suppose $f(t)=g(t)=e^t$ and  $ \Omega$ is a star shaped domain with respect to the origin in $ \IR^N$ where $ N \ge 3$.    Then  $(P)_{\lambda,\gamma}$ has a unique smooth solution provided  the parameters $ 0 < \lambda, \gamma$ are sufficiently close to the origin.
\end{thm}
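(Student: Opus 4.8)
The plan is to prove that, once $\lambda$ and $\gamma$ are small, \emph{every} smooth solution of $(P)_{\lambda,\gamma}$ is small in $L^\infty(\Omega)$, and then to close the argument by a convexity estimate. Throughout, $(u,v)$ denotes an arbitrary smooth solution; by the strong maximum principle $u,v>0$ in $\Omega$, hence $\partial_\nu u\le 0$ and $\partial_\nu v\le 0$ on $\pOm$, while star-shapedness with respect to the origin gives $x\cdot\nu\ge 0$ on $\pOm$; consequently $(x\cdot\nu)\,\partial_\nu u\,\partial_\nu v\ge 0$ on $\pOm$. This sign is the only use of the geometry of $\Omega$.

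First I would establish a Pohozaev/Pucci--Serrin identity. The system is Hamiltonian with $H(u,v)=\lambda(e^v-1)+\gamma(e^u-1)$, so multiplying $-\Delta u=\lambda e^v$ by $x\cdot\nabla v$, multiplying $-\Delta v=\gamma e^u$ by $x\cdot\nabla u$, adding and integrating by parts (using $u=v=0$ on $\pOm$ and $H(0,0)=0$) gives
\begin{equation*}
(N-2)\int_\Omega \nabla u\cdot\nabla v\,dx+\int_{\pOm}(x\cdot\nu)\,\partial_\nu u\,\partial_\nu v\,dS=N\int_\Omega\big(\lambda(e^v-1)+\gamma(e^u-1)\big)\,dx.
\end{equation*}
Testing the two equations with $v$ and $u$ respectively yields $J:=\int_\Omega\nabla u\cdot\nabla v=\lambda\int_\Omega v e^v=\gamma\int_\Omega u e^u$. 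Using the nonnegativity of the boundary term together with the elementary inequality $e^t-1\le e^{t_0}+t_0^{-1}t e^t$ for $t\ge 0$ with $t_0=4N/(N-2)$, the identity collapses to $\tfrac{N-2}{2}J\le N e^{t_0}|\Omega|(\lambda+\gamma)$, so
\begin{equation*}
\lambda\int_\Omega v e^v\,dx=\gamma\int_\Omega u e^u\,dx=J\le C_1(\lambda+\gamma),\qquad C_1=C_1(N,\Omega),
\end{equation*}
and therefore also $\|\lambda e^v\|_{L^1(\Omega)}+\|\gamma e^u\|_{L^1(\Omega)}\le C_2(\lambda+\gamma)$; all of these quantities tend to $0$ as $(\lambda,\gamma)\to(0,0)$.

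Next I would upgrade this to $\|u\|_{L^\infty}+\|v\|_{L^\infty}\to 0$, uniformly over smooth solutions. If not, take $(\lambda_n,\gamma_n)\to(0,0)$ and solutions $(u_n,v_n)$ with $\|u_n\|_\infty+\|v_n\|_\infty\not\to0$. If both sequences stay bounded then $\lambda_n e^{v_n}\to0$ and $\gamma_n e^{u_n}\to0$ in $L^\infty$, so $u_n,v_n\to0$ in $C^1(\overline\Omega)$ by elliptic estimates --- impossible; hence, after relabelling, $\|u_n\|_\infty\to\infty$. By the previous step $\|\lambda_n e^{v_n}\|_{L^1},\|\gamma_n e^{u_n}\|_{L^1}\to0$, so by the $L^1$-theory for the Dirichlet Laplacian $\|u_n\|_{W^{1,p}_0}+\|v_n\|_{W^{1,p}_0}\to0$ for every $p<N/(N-1)$, hence $\|u_n\|_{L^r}+\|v_n\|_{L^r}\to0$ for every fixed $r<N/(N-2)$. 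One then bootstraps this through the coupled system, feeding improved integrability of $e^{v_n}$ into $-\Delta u_n=\lambda_n e^{v_n}$ and of $e^{u_n}$ into $-\Delta v_n=\gamma_n e^{u_n}$, and harnessing the fast growth of the exponential via a Moser--Trudinger/Brezis--Merle type estimate --- here the vanishing of $\|u_n\|_{W^{1,p}_0}$ and $\|v_n\|_{W^{1,p}_0}$ makes the relevant exponential-integrability constant blow up in our favour --- to reach $\|u_n\|_\infty+\|v_n\|_\infty\to0$, a contradiction. \emph{This is the step I expect to be the main obstacle}: Step 1 only gives $L^1$ (in fact $L\log L$) control on $\lambda e^v$ and $\gamma e^u$, which for $N\ge3$ does not by itself bound a solution in $L^\infty$, so the needed exponential integrability must be generated from the energy smallness, and the coupling forces the iteration to be run on the pair $(u_n,v_n)$ simultaneously rather than in a single unknown. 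An alternative, more geometric route is a blow-up analysis: rescaling about a maximum point of $u_n$ at scale $\delta_n=(\gamma_n e^{\|u_n\|_\infty})^{-1/2}$ leads to an entire solution of a system $-\Delta U=\kappa e^V$, $-\Delta V=e^U$ on $\IR^N$ or a half-space with $U\le 0$, $U(0)=0$, and a Liouville-type nonexistence statement for this limit, combined with the mass bound $\gamma_n\int_\Omega e^{u_n}\le C(\lambda_n+\gamma_n)$, yields the contradiction.

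Finally, uniqueness follows from the smallness. Existence of a smooth solution for small parameters is standard (contraction mapping, or the minimal solution of \cite{Mont}). Choose $\lambda,\gamma$ so small that, by the previous step, every smooth solution satisfies $\|u\|_\infty+\|v\|_\infty\le M$ with $(\lambda+\gamma)e^M<\lambda_1(\Omega)$, the first Dirichlet eigenvalue. If $(u,v)$ and $(\bar u,\bar v)$ are two smooth solutions, set $w_1=\bar u-u$, $w_2=\bar v-v\in H_0^1(\Omega)$; by the mean value theorem
\begin{equation*}
-\Delta w_1=\lambda e^{\eta_2}w_2,\qquad -\Delta w_2=\gamma e^{\eta_1}w_1,
\end{equation*}
with $\eta_i$ pointwise between the relevant components, so $0\le\lambda e^{\eta_2},\gamma e^{\eta_1}\le(\lambda+\gamma)e^M$. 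Testing with $w_1$ and $w_2$, adding, and using $2|w_1w_2|\le w_1^2+w_2^2$ together with Poincar\'e's inequality,
\begin{equation*}
\int_\Omega|\nabla w_1|^2+\int_\Omega|\nabla w_2|^2=\int_\Omega(\lambda e^{\eta_2}+\gamma e^{\eta_1})w_1w_2\le\frac{(\lambda+\gamma)e^M}{2\lambda_1(\Omega)}\Big(\int_\Omega|\nabla w_1|^2+\int_\Omega|\nabla w_2|^2\Big),
\end{equation*}
which forces $w_1=w_2=0$, i.e. $(u,v)=(\bar u,\bar v)$.
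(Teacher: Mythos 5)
There is a genuine gap, and it is exactly where you flag it: the passage from the Pohozaev/energy bound to the uniform $L^\infty$ smallness of \emph{every} smooth solution. Your Step 1 is fine (and its identity is essentially the one the paper also uses), and Step 3 is a correct contraction-type argument \emph{conditional} on Step 2. But for $N\ge 3$ the exponential nonlinearity is supercritical, and the $L^1$ (even $L\log L$) control of $\lambda e^v$ and $\gamma e^u$ only yields $W_0^{1,p}$ bounds with $p<N/(N-1)$, hence $L^r$ bounds with $r<N/(N-2)$; this gives no integrability of $e^{u}$, $e^v$ beyond $L^1$, and the Moser--Trudinger/Brezis--Merle mechanism you invoke to "generate exponential integrability from energy smallness" is a two-dimensional tool with no analogue here. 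The blow-up alternative is also not available as stated: the limiting problem $-\Delta U=\kappa e^V$, $-\Delta V=e^U$ in $\IR^N$ with $U\le 0=U(0)$ admits entire (e.g. radial) solutions when $N\ge3$, so there is no Liouville theorem without an extra finite-mass condition, and the mass bound $\gamma_n\int_\Omega e^{u_n}\le C(\lambda_n+\gamma_n)$ rescales with a factor $\delta_n^{-(N-2)}$ and degenerates in the limit. In short, the a priori bound you need is essentially as strong as the theorem itself, and nothing in the proposal proves it.

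The paper avoids this issue entirely by never trying to bound the second solution. Writing $\gamma=\sigma\lambda$, $0<\sigma<1$, it compares an arbitrary solution $(u,v)$ with the minimal solution $(u_{\lambda,\sigma},v_{\lambda,\sigma})$, proves by maximum-principle arguments the pointwise inequalities $v\le u$, $\sigma u\le v$ and, for small $\lambda$, $\sigma u_o\le v_o\le u_o$ for the differences $u_o=u-u_{\lambda,\sigma}$, $v_o=v-v_{\lambda,\sigma}$, and then applies the Pohozaev identity to the \emph{difference system} $-\Delta u_o=\lambda e^{v_{\lambda,\sigma}}(e^{v_o}-1)$, $-\Delta v_o=\sigma\lambda e^{u_{\lambda,\sigma}}(e^{u_o}-1)$. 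Combining with the Poincar\'e inequality and the identities obtained by testing with $u_o,v_o$, one arrives at $\int_\Omega T\le 0$ with an integrand of the form $\sigma\{u_o^2/\lambda+(e^{u_o}-1)u_o-C(e^{u_o}-u_o-1)\}+\{v_o^2/\lambda+(e^{v_o}-1)v_o-C(e^{v_o}-v_o-1)\}$, which is positive for all $(u_o,v_o)\ge(0,0)$ except $(0,0)$ once $\lambda$ is small: large values are handled by the superlinearity of $te^t$ against $e^t$, small values by the $t^2/\lambda$ term. Thus no $L^\infty$ control of the putative second solution is ever needed, which is precisely the ingredient your route cannot supply. If you want to salvage your outline, you would have to replace Step 2 by such a comparison-with-the-minimal-solution argument (or prove the missing a priori bound by some other means, which in this generality is not known to follow from Step 1).
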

 The next result concerns the uniqueness of the extremal solution.   Here we need to specify what we mean by a weak solution,  which we do after stating the theorem.   Also recall that we say a function $f$ is log convex provided $ t \mapsto \log(f(t))$ is convex.   

\begin{thm} \label{ext_uniq} (Uniqueness of extremal solution) 
 
\begin{enumerate}

\item  Suppose  that $\Omega$ is a star shaped domain with respect to the origin in $ \IR^N$ where
$ N \ge 3$.   Suppose that  either $f$ and $ g$ satisfy (R) and are log convex  or that $ f$ and $ g$ satisfy (S) and are strictly convex.   Then given $ (\lambda^*,\gamma^*) \in \Upsilon $ the extremal solution $(u^*,v^*)$ is the unique weak solution of $(P)_{\lambda^*,\gamma^*}$. 

\item Suppose that $f$ is log convex and  satisfies (R) or $f$ satisfies (S) and is strictly convex.    Then the extremal solution $u^*$ is the unique weak solution of $(N)_{\lambda^*}$. 
\end{enumerate}

\end{thm}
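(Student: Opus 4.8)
\textbf{Proof proposal for Theorem \ref{ext_uniq}.}

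The plan is to follow, for each of $(N)_{\lambda^*}$ and $(P)_{\lambda^*,\gamma^*}$, the by-now classical scheme for uniqueness of the extremal solution going back to \cite{Martel}, extended to the $(S)$-type case in \cite{moroc} and adapted to the higher-order / system setting as in \cite{BG,chine,Mont}. Once the notion of weak solution has been fixed (see the definition that follows this statement), the matter reduces to two assertions: (a) every weak solution lies \emph{above} the extremal solution, and (b) every weak solution is \emph{semi-stable}; granting these, a short convexity argument identifies it with the extremal solution. I describe the steps for part (2); part (1) runs in parallel for the cooperative system $(P)_{\lambda,\gamma}$, with $(f,g)$, $(\lambda^*,\gamma^*)$ and the joint linearization replacing $f$, $\lambda^*$ and $\Delta^2-\lambda^* f'(u^*)$, plus the extra ingredient discussed at the end.

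First, $u^*$ is itself semi-stable: letting $\lambda\uparrow\lambda^*$ in \eqref{szz} for the minimal solutions and using Fatou's lemma gives $\lambda^*\int_\Omega f'(u^*)\psi^2\le\int_\Omega(\Delta\psi)^2$ for all $\psi\in H_0^2(\Omega)$. Next, any weak solution $w$ of $(N)_{\lambda^*}$ satisfies $u^*\le w$ a.e.: for $0<\lambda<\lambda^*$ the relation $\Delta^2 w=\lambda^* f(w)\ge\lambda f(w)$ makes $w$ a weak supersolution of $(N)_\lambda$, and since the bilaplacian with Navier boundary conditions is positivity preserving (it factors as two Laplacians with zero Dirichlet data), the monotone iteration defining the minimal solution $u_\lambda$ stays below $w$, so $u_\lambda\le w$; now send $\lambda\uparrow\lambda^*$.

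The crux is to show that an \emph{arbitrary} weak solution $w$ is semi-stable, $\lambda^*\int_\Omega f'(w)\psi^2\le\int_\Omega(\Delta\psi)^2$. Morally, if $w$ were strictly unstable one would perturb it in the unstable direction to obtain a strict subsolution of $(N)_\lambda$ for some $\lambda>\lambda^*$ sitting below a supersolution, producing a solution beyond $\lambda^*$ and contradicting the definition of $\lambda^*$; the difficulty is to make this rigorous when $u^*$ (hence possibly $w$) is unbounded, since then the linearized operator has no straightforward spectral theory. This is precisely where the convexity hypotheses are spent: for nonlinearities of type $(R)$ one argues through truncations of $w$ and the log-convexity of $f$ controls the error terms — in the Navier case these are the cross terms produced when $\Delta^2$ is unfolded into two Laplacians, as in \cite{chine} — while for type $(S)$, where the extremal solution is singular only in the mild sense $f(u^*)=\infty$ on $\{u^*=1\}$, strict convexity of $f$ suffices. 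With $w$ semi-stable, set $v:=w-u^*\ge0$; convexity of $f$ at $w$ gives $f(w)-f(u^*)\le f'(w)v$, hence $\Delta^2 v\le\lambda^* f'(w)v$, and testing with $v$ yields $\int_\Omega(\Delta v)^2\le\lambda^*\int_\Omega f'(w)v^2$, whereas semi-stability of $w$ gives the reverse; so $f(w)-f(u^*)=f'(w)(w-u^*)$ a.e., i.e.\ equality in the tangent-line inequality for the convex function $f$, which by strict convexity forces $w=u^*$.

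For the system, part (1), all of this transcribes using the cooperative structure of $(P)_{\lambda,\gamma}$ and the componentwise convexity of $f$ and $g$, except that here the star-shapedness of $\Omega$ is genuinely needed: within the restrictive class of weak solutions we use for $(P)_{\lambda,\gamma}$, the step establishing semi-stability of an arbitrary weak solution — as well as the comparison and the testing above — requires a priori integrability of $f(v)$, $g(u)$, $f'(v)$, $g'(u)$, which is supplied by a Pohozaev-type identity on the star-shaped domain, the same kind of identity underlying Theorems \ref{main1} and \ref{system_uniq}. I expect the main obstacle to be exactly the semi-stability step for a singular extremal solution: reconciling the perturbation/continuation heuristic with the absence of a clean spectral theory, and, for the system, compensating for the lack of a maximum principle beyond cooperative comparison by means of the Pohozaev identity.
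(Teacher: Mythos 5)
Your scheme reduces uniqueness to the assertion that \emph{every} weak solution at the extremal parameter is semi-stable, and that assertion is exactly the point you leave at the level of a heuristic (``perturb in the unstable direction''). This is a genuine gap: at $\lambda^*$, semi-stability of an arbitrary weak solution is essentially equivalent to the uniqueness being proved, it is not a citable lemma, and no actual truncation/log-convexity argument is supplied that would deliver it. Even granting it, your closing computation tests the distributional inequality $\Delta^2 v\le \lambda^* f'(w)v$ against $v=w-u^*$ and uses $\int_\Omega v\,\Delta^2 v=\int_\Omega(\Delta v)^2$, which requires $H^2$-type regularity and the Navier conditions for $v$ that a weak solution (only $u\in L^1$, $f(u)\in L^1$) does not a priori possess; and for the system the scalar inequality you implicitly use is not the correct linearized stability notion for the cooperative pair. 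You also misplace the role of star-shapedness: in the paper the Pohozaev-type identity is applied to the \emph{minimal} solutions along the ray through $(\lambda^*,\gamma^*)$ in order to show that the extremal pair $(u^*,v^*)$ is a weak solution in the restrictive sense of the paper ($u^*g(u^*),\,v^*f(v^*)\in L^1(\Omega)$ plus an $H_0^1$ bound); it is not used to gain integrability for an arbitrary competing weak solution.

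The paper's proof avoids stability entirely and runs the Martel-type convexity-defect argument of \cite{BG,chine,Mont}: normalizing $\lambda^*=1$, a second weak solution satisfies $u\ge u^*$ by minimality, and the midpoint $z=\frac{u+u^*}{2}$ solves the equation with an additional nonnegative defect $h=\frac{f(u)+f(u^*)}{2}-f(z)$, which by strict convexity is positive on a set of positive measure. Solving $\Delta^2\chi=h$ and $\Delta^2\phi=1$ (Navier conditions) and using Hopf's lemma gives $\chi\ge\varepsilon\phi$, so $\tau=z+\varepsilon\phi-\chi\le z$ is a weak supersolution of the problem with nonlinearity $f(\cdot)+\varepsilon$. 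The real work, and precisely where log convexity (Lemma \ref{log_conv}, parts 1 and 2) in case (R), respectively part 5 in case (S), is spent, is the Claim upgrading this possibly unbounded supersolution to a \emph{smooth} solution $w$ of the problem with nonlinearity $f(\cdot)+\frac{\varepsilon}{2}$; a further perturbation $\overline{w}=w+\alpha w-\frac{\varepsilon\phi}{2}$ with $\alpha$ small then gives a supersolution of $\Delta^2\overline{w}\ge(1+\alpha)f(\overline{w})$, hence by monotone iteration a smooth solution of $(N)_{1+\alpha}$ (respectively of $(P)_{1+\alpha,\sigma(1+\alpha)}$ in part 1), contradicting the definition of the extremal parameter. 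So the convexity hypotheses are not used to control ``error terms in a stability proof'' but to manufacture a solution strictly beyond $\lambda^*$; to make your route work you would first have to prove the semi-stability step, which is the very thing this construction is designed to circumvent.
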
  We point out that with an extra argument, see \cite{Martel}, one can remove the strict convexity assumption on $f$. 
We now define what we mean by a weak solution.  We remark that in the case of $(N)_\lambda$ our definition coincided with the one given in \cite{BG}. 

\begin{dfn} Suppose that $ f$ and $ g$ satisfy (R). 
 \item  We say that  $u$ is a weak solution of $(N)_\lambda$ provided: $  f(u) \in L^1(\Omega)$ and that
\begin{equation} \label{weak_nav}
\int_\Omega u \Delta^2 \phi  = \int_\Omega  \lambda f(u) \phi \qquad \forall \phi \in X_{N}:= \left\{ \phi \in C^4(\overline{\Omega}): \phi=\Delta \phi =0 \;  \pOm \right\}.
\end{equation}

\item We say $ (u,v)$ is a weak solution of $(P)_{\lambda,\gamma}$ provided $f(v),g(u) \in L^1(\Omega)$ and
\begin{equation} \label{syst_weak}
\int_\Omega ( -\Delta \phi ) u = \int_\Omega \lambda f(v) \phi, \qquad  \int_\Omega (-\Delta \phi)  v = \int_\Omega \gamma \phi g(u)
\end{equation} for all $ \phi \in X_P:= \left\{ \phi \in C^2(\overline{\Omega}): \phi=0 \; \pOm \right\}$.

\end{dfn} 
In the case where $f$ and $g$ satisfy (S) we have the added condition that $ u,v \le 1$ a.e. in $ \Omega$.

\begin{remark} \label{m_remark}  At this point it is important that we mention that the notion of weak solution considered in \cite{Martel} and \cite{Mont}  requires that $ \delta f(u) \in L^1(\Omega)$, respectively $ \delta f(v), \delta g(u) \in L^1(\Omega)$, where $ \delta(x)$ is the distance from $ x$ to $\pOm$.   As mentioned previously \cite{Mont} has shown the existence of a weak solution (using his weaker notion) to $(P)_{\lambda^*,\gamma^*}$ but it is not immediately clear that this is a weak solution in our sense.  Because of this we choose to work in domains where we can prove some regularity of the extremal solution.

We remark that much of the approach we take in showing the uniqueness of the extremal solution in both the fourth order cases and the systems case is taken directly from \cite{BG} and \cite{chine}.   In \cite{BG} they developed a method capable of dealing with log convex nonlinearities in the case of the problem $(N)_\lambda$ and they used this technique to show that there are no weak solutions for $ \lambda > \lambda^*$.    This result is a major step in showing the uniqueness of the extremal solution.   In \cite{chine} the methods were extended to show the extremal solution is unique in the case of $(D)_\lambda$ on radial domains.    At essentially no extra effort this approach yields the same result for the Navier problem.

\end{remark}

\begin{remark}  In the case where $f$ is suitably subcritical one can show the existence of a second solution of $ (N)_\lambda$  (resp.  $(D)_\lambda$) for $ 0 < \lambda < \lambda^*$  (resp.  for small $ \lambda$).  Here one uses the methods from \cite{CR}. We omit the proofs. 
\end{remark}    

We mention that in a future work many of the results here are extended to equations of the form 
\[ (-\Delta)^\frac{1}{2} u = \lambda f(u) \qquad \Omega,\]
see \cite{extra}.

\section{Proofs} 

We begin with some needed results regarding the  nonlinearities.

\begin{lemma} \label{log_conv} 
\begin{enumerate}

 \item Suppose that $f$ satisfies (R) and is log convex.  Given $ \E>0$ there exists some $ 0 < \mu <1$ such that
\[ \mu^2 \left( f( \mu^{-1} t) +\E \right) \ge f(t) + \frac{\E}{2}, \]  for all $ 0 \le t$.

\item Suppose that $f$ satisfies (R) and is log convex.  Given $ 0 < \mu <1$ and $ N \ge 1$  there exists some $ k \ge 0$ such that
\[ N f(t) \le f( \mu^{-1} t ) + k, \] for all $ 0 \le t$.

\item Suppose that $f$ satisfies (R) and is log convex.  Then $ \lim_{t \rightarrow \infty} \frac{f(t) t}{F(t)}=\infty$.

 \item   Suppose that $ f$ satisfies (S).   Then $ \lim_{t \nearrow 1} \frac{f(t)}{F(t)}= \infty$. 
 
 \item Suppose that $ f$ satisfies (S).  Given $ \E>0$ there exists some $ 0 < \mu <1$ such that 
 \[ \mu \{ f( \mu^{-1} t) +\E) \} \ge f(t) + \frac{\E}{2}, \] for all $ 0 \le t \le \mu$. 
 
 \item Suppose that $f$ satisfies (R)  and is log convex.  Then $f$ is strictly convex. 
 
\end{enumerate}
\end{lemma}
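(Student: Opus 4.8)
The plan is to reduce parts (1), (2), (3) and (6) to properties of $h:=\log f$, which under (R) together with log convexity is smooth and convex with $h(0)=0$; consequently $h'$ is nondecreasing, $h'\ge 0$, and comparing $h$ with its supporting line at a point $t>0$, evaluated at $0$, yields the basic inequality $t\,h'(t)\ge h(t)$. Since $f$ is superlinear, $h(t)=\log f(t)\to\infty$ as $t\to\infty$. Combining these, for every $0<\mu<1$ and $t>0$,
\[ h(\mu^{-1}t)-h(t)=\int_t^{\mu^{-1}t}h'(s)\,ds\ \ge\ (\mu^{-1}-1)\,t\,h'(t)\ \ge\ (\mu^{-1}-1)\log f(t), \]
and the right-hand side tends to $+\infty$ with $t$. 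This settles (2) at once: past some $t_0$ the inequality reads $h(\mu^{-1}t)-h(t)\ge\log N$, i.e. $Nf(t)\le f(\mu^{-1}t)$, while on the compact set $[0,t_0]$ the continuous function $Nf(t)-f(\mu^{-1}t)$ is bounded above, so any $k\ge 0$ exceeding that bound works.

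For (3) I would use instead the chord estimate $h(\tau)\le\frac{\tau}{t}h(t)$ on $[0,t]$ (convexity plus $h(0)=0$): exponentiating and integrating gives $F(t)=\int_0^t e^{h(\tau)}\,d\tau\le\frac{t}{h(t)}\bigl(e^{h(t)}-1\bigr)\le\frac{t f(t)}{\log f(t)}$ for $t$ large, hence $t f(t)/F(t)\ge\log f(t)\to\infty$. Part (4) needs only that $f$ is increasing with $f(1^-)=\infty$: for fixed $0<a<1$ and $a<t<1$ one has $F(t)=F(a)+\int_a^t f\le F(a)+(1-a)f(t)$, so $\limsup_{t\to 1^-}F(t)/f(t)\le 1-a$, and letting $a\to 1^-$ forces $F(t)/f(t)\to 0$, i.e. $f(t)/F(t)\to\infty$. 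Part (5) uses convexity of $f$ directly: for $0\le t<\mu$ we have $t=\mu(\mu^{-1}t)+(1-\mu)\cdot 0$ with $\mu^{-1}t\in[0,1)$, so $f(t)\le\mu f(\mu^{-1}t)+(1-\mu)$, whence $\mu\{f(\mu^{-1}t)+\E\}\ge f(t)-(1-\mu)+\mu\E$, and this is $\ge f(t)+\E/2$ as soon as $\E(\mu-\tfrac12)\ge 1-\mu$, which holds once $\mu$ is close enough to $1$ (the endpoint $t=\mu$ being trivial since the left side is then $+\infty$). Part (6) is a short computation: $f=e^h$ gives $f''=(h''+(h')^2)f$; since $f$ is increasing, $h'=f'/f\ge 0$, and $h'$ cannot vanish, for otherwise $h'\equiv 0$ on a left half-line and $f$ would be constant there; hence $f''\ge (h')^2 f>0$.

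The part needing real care is (1), and essentially all the content is the order of the quantifiers. First I would note $\frac{2\log(1/\mu)}{\mu^{-1}-1}\to 2$ as $\mu\to 1^-$, and fix $\mu_0<1$ with this quantity $\le 3$ for $\mu\in(\mu_0,1)$; next fix $T$ with $\log f(T)\ge 3$ (possible since $f\to\infty$); finally fix $\mu\in(\mu_0,1)$ which in addition satisfies $\mu^2\ge\tfrac34$ and, by uniform continuity of $f$ on the fixed compact interval $[0,\mu_0^{-1}T]$, also $\sup_{0\le t\le T}|\mu^2 f(\mu^{-1}t)-f(t)|\le\E/4$. Then for $t\ge T$ the main estimate gives $h(\mu^{-1}t)-h(t)\ge(\mu^{-1}-1)\log f(T)\ge -2\log\mu$, i.e. $\mu^2 f(\mu^{-1}t)\ge f(t)$, and adding $\mu^2\E\ge\E/2$ to the left side closes the inequality; while for $0\le t\le T$ one gets $\mu^2(f(\mu^{-1}t)+\E)\ge f(t)-\E/4+\tfrac34\E=f(t)+\E/2$.

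I expect (1) to be the main obstacle: the single parameter $\mu$ must be close enough to $1$ to control the compact block $[0,T]$ and the size of the constants, yet far enough from $1$ (so that $\mu^{-1}$ is large) for log convexity to force the super-quadratic gain $f(\mu^{-1}t)\ge\mu^{-2}f(t)$ on the tail. So the four quantities have to be chosen in the order $\E$, then $\mu_0$, then $T$, then $\mu$, and the only genuine point is verifying that these demands are compatible — which they are, because $t\,h'(t)\ge h(t)=\log f(t)\to\infty$ gives as much tail growth as needed while the limit $\frac{2\log(1/\mu)}{\mu^{-1}-1}\to 2$ keeps the required threshold bounded.
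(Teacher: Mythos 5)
Your proofs are correct, and in several places they take a genuinely different (and more self-contained) route than the paper. The paper does not prove items 1 and 2 at all --- it cites \cite{BG} and \cite{chine} --- so your argument for these, based on the support-line inequality $t\,h'(t)\ge h(t)$ for $h=\log f$ (using $h(0)=0$) together with $h(\mu^{-1}t)-h(t)\ge(\mu^{-1}-1)\,t\,h'(t)$, is the real added content; your handling of the quantifier order in item 1 (tail estimate uniform in $\mu\in(\mu_0,1)$ because the required gain $2\log(1/\mu)$ and the available gain $(\mu^{-1}-1)\log f(T)$ shrink at comparable rates, plus uniform continuity on the compact block) is complete and correct --- indeed $\log(1/\mu)\le\mu^{-1}-1$ makes the ratio condition automatic, so the preliminary choice of $\mu_0$ could even be dropped. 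For item 3 the paper argues via L'H\^opital, reducing to $t f'(t)/f(t)\to\infty$ using monotonicity of $f'/f$; your chord bound $h(\tau)\le(\tau/t)h(t)$ giving $F(t)\le t f(t)/\log f(t)$ is more elementary and avoids L'H\^opital altogether, at the price of being specific to the log-convex structure (which is all the lemma asserts anyway). Item 4 is the same elementary idea as the paper's Riemann-sum estimate, item 5 is the ``simple calculus'' the paper omits, and item 6 coincides with the paper's computation; in both your argument and the paper's, the step ``$h'$ cannot vanish, else $f$ is constant on a left half-line'' tacitly reads ``increasing'' in (R) as excluding intervals of constancy, so this is not a gap relative to the paper, though it is worth being aware of.
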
     In the case of a system with nonlinearities $f$ and $g$ one can take the parameters promised by the above lemma to be equal.

\begin{proof} See \cite{BG} and  \cite{chine} for the proof of 1 and 2.    \\
  3. Using L'hopital's rule 
one sees that it is sufficient to show that  $ \lim_{t \rightarrow \infty} t \frac{f'(t)}{f(t)} = \infty$.  But since $f$ is log convex we have $ t \mapsto \frac{f'(t)}{f(t)}$ is increasing and hence we are done.   \\
4.   Let $ 0 <t<1$ and we approximate $ F(t)$ using a Riemann sum with $n$ partition points and  right hand endpoints.  Doing this and using the fact that $ f$ is increasing one obtains the estimate
\[ F(t) \le \frac{(n-1)}{n} f( \frac{(n-1)}{n} t) + \frac{f(t)}{n}.\]  From this we have that $ \limsup_{t \nearrow 1} \frac{F(t)}{f(t)} \le \frac{1}{n}$ and since $n$ is arbitrary we have the desired result.  \\
5.  This follows from some simple calculus.  \\
6. Since $f$ is log convex  and increasing on $ \IR$ we can write $ f(t) = e^{\beta(t)}$ where $ \beta(t)$ is increasing and convex.  Note that by the convexity we have that $ \beta'(t)>0$ all $ t$ and so $f''(t)\ge e^{\beta(t)} \beta'(t)^2 >0$. 

\end{proof}

\textbf{Proof of Theorem \ref{main1}:}      Let $f$ satisfy (R) or (S),  $ N \ge 5$,   $ 0 < \lambda$ be small and let $ u_\lambda$ denote either the minimal solution of $(N)_\lambda$ or the solution of $(D)_\lambda$ as in the above proposition.    Suppose that $ u$ is another  solution  and set $ v:= u - u_\lambda$, so $ v $ is not identically zero.  Note that in the Navier case we have $ v \ge 0$ but in the Dirichlet case $v$ might change sign.    Then $ v$ satisfies
\begin{equation} \label{eq1}
\Delta^2 v = \lambda g(x,v)= \lambda \left\{ f(u_\lambda +v) - f(u_\lambda) \right\} \qquad \mbox{ in }   \Omega,
\end{equation} with the appropriate boundary conditions.  We now multiply (\ref{eq1}) by $ -x \cdot \nabla v$ and integrate.  In the Navier case some computations show that
\begin{equation*} 
 \int_\Omega (-x \cdot \nabla v) ( \Delta^2 v) = \frac{(N-4)}{2} \int_\Omega (\Delta v)^2 + \int_{\pOm} | \nabla (\Delta v)| | \nabla v| \nu \cdot x,
 \end{equation*}   where $ \nu$ is the outward pointing normal on $ \pOm$.  In this computation one did need to take into the account that $ -\Delta v, v \ge 0$ in $ \Omega$.     In the Dirichlet case a computation shows that
\begin{equation*}   \int_\Omega (-x \cdot \nabla v) ( \Delta^2 v) = \frac{(N-4)}{2} \int_\Omega (\Delta v)^2 + \frac{1}{2} \int_{\pOm} (\Delta v)^2 x \cdot \nu,
\end{equation*}   see \cite{taka}.   In either case the boundary integrals are  nonnegative since $ \Omega$ is star-shaped with respect to the origin and so we have 

\begin{equation*}     \frac{(N-4)}{2} \int_\Omega (\Delta v)^2 \le \int_\Omega (-x \cdot \nabla v) ( \Delta^2 v),
\end{equation*}    and using (\ref{eq1}) we have 
\begin{equation}   \label{aaaa}   \frac{(N-4)}{2} \int_\Omega (\Delta v)^2 \le \int_\Omega (-x \cdot \nabla v) \lambda \{ f(u_\lambda+v)-f(u_\lambda) \}.
\end{equation}   Define $ h(x,\tau):= f(u_\lambda(x)+\tau)-f(u_\lambda(x))$ and $ H(x,t):=\int_0^t h(x,\tau) d \tau$. Then $ H(x,t)= F(u_\lambda+t)-F(u_\lambda)-f(u_\lambda)t$, \;  $ \nabla_x H(x,t)=\{f(u_\lambda+t)-f(u_\lambda)-f'(u_\lambda)t\} \nabla u_\lambda$ and the chain rule gives $ \nabla H(x,v)= \nabla_x H(x,v) + h(x,v) \nabla v$.  So the right hand side of (\ref{aaaa}) is 
\[ \lambda \int_\Omega (-x \cdot \nabla v) h(x,v),\] which, after and integration by parts, is equal to 
\[ \lambda N \int_\Omega H(x,v) + \lambda \int_\Omega \nabla_x H(x,v) \cdot x.\]  Combining this with (\ref{aaaa}) and writing everything back in terms of $f$ and $F$ we arrive at 
\begin{eqnarray} \label{mai}
 \frac{(N-4)}{2} \int_\Omega (\Delta v)^2 & \le & \lambda N \int_\Omega \{F(u_\lambda+v)-F(u_\lambda)-f(u_\lambda)v\} \nonumber \\
 && + \lambda \int_\Omega (x \cdot \nabla u_\lambda) \{ f(u_\lambda+v) - f(u_\lambda)-f'(u_\lambda)v \}.
 \end{eqnarray}    For any $ 0 < \sigma <1$ there exists some $ C_\sigma>0$ such that  the left hand of (\ref{mai}) is bounded below by 
 \[ \frac{(N-4) \sigma}{2} \int_\Omega (\Delta v)^2 + C_\sigma \int_\Omega v^2,\] but using (\ref{eq1})  one sees that 
 \[ \int_\Omega (\Delta v)^2 = \lambda \int_\Omega \{ f(u_\lambda +v) - f(u_\lambda)\} v.\]  Putting this all together gives 
 \begin{eqnarray} \label{mayy}
 \int_\Omega \frac{(N-4) \sigma}{2} \{ f(u_\lambda+v)-f(u_\lambda)\} v + \frac{C_\sigma}{\lambda}v^2 & \le & N \int_\Omega \{ F(u_\lambda +v) - F(u_\lambda) -f(u_\lambda)v \} \nonumber \\
 && + \int_\Omega (x\cdot \nabla u_\lambda) \{ f(u_\lambda+v)-f(u_\lambda)-f'(u_\lambda) v\}.
 \end{eqnarray}  which we rewrite  as 
 \[ \int_\Omega T_\lambda(x,v) \le 0, \]  where 
 \begin{eqnarray*}
 T_\lambda(x,t)& = & \frac{(N-4) \sigma}{2} \{ f(u_\lambda+t) - f(u_\lambda) \} t + \frac{C_\sigma}{\lambda}  t^2 \\
 && - N \{ F(u_\lambda+t) - F(u_\lambda) -f(u_\lambda)t\} \\
 && - (x \cdot \nabla u_\lambda) \{ f(u_\lambda+t) - f(u_\lambda) -f'(u_\lambda)t\}.
 \end{eqnarray*}  The idea now is to obtain a contradiction by  showing that for small enough $ \lambda$ that $ T_\lambda(x,t)  >0$ for all $ x \in \Omega$ and for all $ t $  in a specific range which depends the whether $f$ satisfies (R) or (S) and whether we are in the Navier or the Dirichlet case.  Let $ S_\lambda(x,t)$ be equal to $T_\lambda(x,t)$ except that we replace the last term  $- (x \cdot \nabla u_\lambda) \{ f(u_\lambda+t) - f(u_\lambda) -f'(u_\lambda)t\}$ with $ \E_\lambda \{ f(u_\lambda+t) - f(u_\lambda) -f'(u_\lambda)t\}$ where $ \E_\lambda:= \| x \cdot \nabla u_\lambda\|_{L^\infty}$.  Note that since $ f$ is convex we have that $ T_\lambda(x,t) \ge S_\lambda(x,t)$.   We now suppose that $ f$ satisfies (R) or (S).  To show the desired positivity it is convenient to treat the cases of $ \tau$ near $ -\infty$, $ 0$ and $ \infty$ separately.   \\
 Case $ t \approx \infty$: \\   Let $ \beta$ satisfy  
 \[ \frac{2N}{N-4} < \beta < \liminf_{ t \rightarrow \infty} \frac{f(t)t}{F(t)},\]  and so there exists some $ t_0 > 1$ such that for all $ t \ge t_0-1$ we have $ f(t) t > \beta F(t)$.  Let $ 0< \lambda_0 $ be sufficiently small such  
that $ \| u_\lambda \|_{L^\infty} + \E_\lambda \le 1$ for all $ \lambda \le \lambda_0$.    So we have that $ f(u_\lambda+t) (u_\lambda+t) > \beta F(u_\lambda+t)$ for all $ t \ge t_0, x \in \lambda$ and $ \lambda \le \lambda_0$.  Pick $ \sigma $ such that $ \frac{2N}{\beta(N-4)} < \sigma <1$.  Then for $ t \ge t_0,  \lambda < \lambda_0, x \in \Omega$ we have  
\begin{eqnarray*}
S_\lambda(x,t) & \ge & f(u_\lambda+t) \left[ t \left\{ \frac{(N-4)\sigma}{2} - \frac{N}{\beta} \right\} -\E_\lambda - \frac{N u_\lambda}{\beta} \right] \\
&& + \frac{C_\sigma t^2}{\lambda} + N F(u_\lambda) - \frac{(N-4) \sigma}{2} f(u_\lambda) t.
\end{eqnarray*} 
  Now using the fact that $f$ is superlinear at $ \infty$ and since $ \frac{(N-4)\sigma}{2} - \frac{N}{\beta} >0$  one sees that 
there is some $ t_1 \ge t_0$ such that $ S_\lambda(x,t) >0$ for all 
$ x \in \Omega,  \lambda \le \lambda_0,  t \ge t_1$.    \\
Cases $ \tau \approx 0$ and $ \tau \approx -\infty$: 
 We now assume that $ - \infty <t \le t_1, \lambda \le \lambda_0, x \in \Omega$.  By the monotonicity and convexity  of $ f$ we have the lower bound 
\begin{eqnarray} \label{pzp}
 S_\lambda(x,t) & \ge & \frac{C_\sigma t^2}{\lambda} - N \{ F(u_\lambda+t) -F(u_\lambda) - f(u_\lambda)t \} \nonumber \\
&& - \{ f(u_\lambda+t) - f(u_\lambda) - f'(u_\lambda)t \}.
\end{eqnarray}  Note that all terms except the first term grow at most linearly in $t$ as $ t \rightarrow -\infty$.  
Hence there exists some $ \lambda_1 \le \lambda_0$ such that $ S_\lambda(x,t) >0$ for all $ -\infty <t \le -1$, $ \lambda < \lambda_1, x \in \Omega$.   Note this step is not needed in the Navier case.  \\
Case $ \tau  \approx 0$: \\ 
By Taylor's Theorem there exists some $ C_1 >0$ such that 
\[ | F(u_\lambda+t) -F(u_\lambda) - f(u_\lambda) t | \le C_1 t^2, \quad | f(u_\lambda+t) - f(u_\lambda) - f'(u_\lambda)t | \le C_1 t^2 ,\]  for all $ -1 \le t \le t_1,  \lambda < \lambda_0, x \in \Omega$.  Substituting this into (\ref{pzp}) and taking $ \lambda_1$ smaller if necessary we have that $ S_\lambda(x,t) >0$ for all $  0 \neq t \in [-1,t_1],  \lambda < \lambda_0, x \in \Omega$.   \\
We now assume that $ f$ satisfies (S).    Our starting point is (\ref{mayy}) and we take $ \sigma = \frac{1}{2}$.  Again we break the interval for $ t$ into 3 regions (but now the regions depends on $x$): $ t \in (1-\E - u_\lambda(x), 1-u_\lambda(x))$  (where $ \E>0$ is small),  $t \in (-1, 1-\E -u_\lambda(x))$ and $ t \in (-\infty,-1]$.  We argue as before and we use Lemma \ref{log_conv}, 4   to get the desired positivity on the first region.  For the other regions we argue as before.  We omit the details.

\hfill $ \Box$

 \textbf{Proof of Theorem \ref{system_uniq}:}    Let $ \Omega$ be a domain in $ \IR^N$ with $ N \ge 3$ and which is star shaped with respect to the origin.    Our goal it to show that the only solution of $(P)_{\lambda,\gamma}$ for $(\lambda,\gamma) \in \mathcal{Q}$ with $ \lambda^2 + \gamma^2$ small is the minimal solution.  By a symmetry argument it is sufficient to prove the result for $  0 \le \gamma \le \lambda$.  If $ \gamma = \lambda$ then $(P)_{\lambda,\gamma}$ reduces to the scalar equation, see (\ref{point}),  and we have uniqueness.  Instead of using parameters $(\lambda,\gamma)$ we prefer to use $ (\lambda,\gamma)=( \lambda, \sigma  \lambda)$  and after considering the  above comments we restrict our attention to $0 < \sigma <1$.    So with this notation we let $ (u_{\lambda, \sigma},v_{\lambda, \sigma})$ denote the minimal solution of $(P)_{\lambda, \lambda \sigma}$ where $ 0 < \lambda $ and $ 0 < \sigma <1$.      A standard argument shows that $(u_{\lambda, \sigma}, v_{\lambda,\sigma}) \rightarrow 0$ in $ C^1(\overline{\Omega}) \times C^1(\overline{\Omega})$ as $ \lambda \rightarrow 0$ uniformly in $ 0 < \sigma <1$.

 Let $ (u,v)$ denote a second solution of $ (P)_{\lambda,\sigma \lambda}$ and set
  $ u_o := u - u_{\lambda,\sigma}$, $ v_o:= v-v_{\lambda,\sigma}$.   Note these are both nonnegative and not identically zero.  We first obtain the pointwise estimates:
  \begin{equation} \label{point}
  i)  \;  v \le u, \quad  ii)  \;  \sigma u \le v, \quad  iii) \; \sigma u_o \le v_o,  \quad   iv) \;  v_o \le u_o,
  \end{equation}  where for i)- iii) there are no parameter restrictions but    in iv) the inequality will only hold for $ 0 < \lambda < \lambda_1$ and $ 0 < \sigma <1$ where $ \lambda_1>0$ is small.   \\
   Note that since $(u,v)$ is any solution that i) and ii) also hold for the minimal solution.  We now proof these.   i)  First note that we have $ -\Delta (u-v) = \lambda ( e^v - \sigma e^u)$.  Multiply this by $(u-v)_- $ and integrate over $ \Omega$ to see that

  \[ - \int_\Omega | \nabla (u-v)_- |^2 = \lambda \int_\Omega (e^v - \sigma e^u) (u-v)_- ,\]  and note the right hand side is nonnegative,  hence the left hand side is zero and we have $(u-v)_- =0 $ a.e..    \\
  ii)  Note that $ -\Delta (v - \sigma u) = \lambda \sigma (e^u - e^v)$ which is nonnegative after considering i) and after an application of   the maximum principle we see that $ v \ge \sigma u$. \\
  iii)  A computation shows that $(u_o,v_o)$ satisfy
  \begin{eqnarray}
   -\Delta u_o & = & \lambda e^{v_{\lambda,\sigma}} ( e^{v_o}-1) \qquad \Omega, \label{bare} \\
      -\Delta v_o &=& \sigma \lambda e^{u_{\lambda,\sigma}} ( e^{u_o} -1)  \qquad   \Omega,  \label{poo}
   \end{eqnarray}
    with zero Dirichlet boundary conditions.
    Set $ \Omega_0:= \left\{ x \in \Omega: v_o(x) < \sigma u_o(x) \right\}$.
    To show iii) we need to show that $ \Omega_0$ is empty, so towards a contradiction we assume its not.   Note that in $ \Omega_0$ we have
  \begin{eqnarray*}
  -\Delta (v_o- \sigma u_o) &=& \lambda \sigma \left\{ e^{u_{\lambda,\sigma}} (e^{ u_o}-1) - e^{v_{\lambda,\sigma}} ( e^{v_o} -1) \right\} \\
  & \ge & \lambda \sigma \left\{ e^{v_{\lambda,\sigma}} (e^{ u_o}-1) - e^{v_{\lambda,\sigma}} ( e^{v_o} -1) \right\} \quad \mbox{ by i)} \\
  &=& \sigma \lambda e^{v_{\lambda,\sigma}} ( e^{ {u_o}} - e^{ {v_o}} ) \\
  & \ge & \sigma \lambda e^{v_{\lambda,\sigma}} (   e^\frac{ {v_o}}{\sigma} - e^{ {v_o}} )
  \end{eqnarray*}  where the last line follows since we are in $ \Omega_0$.   Now since $ \sigma <1$ one sees the final quantity is nonnegative and hence we have that $ -\Delta ( {v_o} - \sigma {u_o}) \ge 0$ in $ \Omega_0$. Applying the maximum principle we have $ {v_o} \ge \sigma {u_o}$ in $ \Omega_0$,  which gives us the desired contradiction.     \\
  iv)   A computation shows that
  \begin{eqnarray} \label{thingy}
  -\Delta ( {u_o} - {v_o} ) &=& \lambda e^{v_{\lambda, \sigma}}( e^{ {v_o}} -1) - \sigma \lambda e^{u_{\lambda,\sigma}} ( e^{ {u_o}}-1)  \nonumber \\
  & \ge & \lambda e^{ \sigma u_{\lambda,\sigma} } ( e^{{v_o}}-1) - \sigma \lambda e^{u_{\lambda,\sigma}} ( e^{ {u_o}}-1)
  \end{eqnarray} since $ v_{\lambda,\sigma} \ge \sigma u_{\lambda,\sigma}$.

   A calculus argument shows that there exists some $ 0 <t_0$ small such that  one has
   \begin{equation} \label{cal}
    e^{\sigma t} \ge \sigma e^t \qquad \forall 0 \le t \le t_0, \; \forall 0 < \sigma <1.
    \end{equation}
  Let $ 0 < \lambda_1$ be sufficiently small such that for all $ 0 < \lambda < \lambda_1$ one has that $ \| u_{\lambda, \sigma}\|_{L^\infty} < t_0$ for all $ 0 < \sigma <1$.     We now take $ 0 < \lambda < \lambda_1$  and note that we have $ e^{\sigma u_{\lambda,\sigma}} \ge \sigma e^{u_{\lambda,\sigma}}$.  Substituting  this into (\ref{thingy}) gives

 \[ -\Delta ( {u_o} - {v_o})  \ge  \lambda \sigma e^{u_{\lambda,\sigma}}  ( e^{ {v_o}} - e^{ {u_o}}) \]  which re-arranges to
   \[ -\Delta ( {u_o} - {v_o} ) + \lambda \sigma u^{u_{\lambda,\sigma}} c(x) ( {u_o} - {v_o}) \ge 0,\]
  where $ c(x) = \frac{e^{ {u_o}} - e^{{v_o}}}{ {u_o} - {v_o}} \ge 0$ and is smooth.   The maximum principle now gives the desired result and we have completed the proofs of i) - iv).   We now return to proving uniqueness.   Let $ 0 < \lambda < \lambda_1$ and $ 0 < \sigma <1$.
   Multiply (\ref{bare}) by $-x \cdot \nabla {v_o}$ and (\ref{poo}) by $ -x \cdot \nabla {u_o}$  and integrate to obtain
   \begin{eqnarray} \label{first}
   \int_\Omega \Delta u_o ( x \cdot \nabla v_o) & = &  \lambda N \int_\Omega e^{v_{\lambda,\sigma}} ( e^{v_o} - v_o-1) \nonumber \\
   && + \lambda \int_\Omega e^{v_{\lambda,\sigma}} ( x \cdot \nabla v_{\lambda,\sigma}) ( e^{v_o} -v_o -1)
   \end{eqnarray}

   \begin{eqnarray} \label{second}
   \int_\Omega \Delta v_o ( x \cdot \nabla u_o) & = &  \lambda N \sigma \int_\Omega e^{u_{\lambda,\sigma}} ( e^{u_o} - u_o-1) \nonumber \\
   && + \lambda \sigma \int_\Omega e^{u_{\lambda,\sigma}} ( x \cdot \nabla u_{\lambda,\sigma}) ( e^{u_o} -u_o -1).
   \end{eqnarray}  A computations shows that $ \Delta ( x \cdot \nabla v_o) = 2 \Delta v_o + x \cdot \nabla ( \Delta v_o)$.   Using this and a integration by parts shows that

   \begin{equation} \label{three}
   \int_\Omega \Delta u_o ( x \cdot \nabla v_o) + \Delta v_o ( x \cdot \nabla u_o) = (N-2) \int_\Omega \nabla u_o \cdot \nabla v_o +   \int_{\pOm} | \nabla u_o | | \nabla v_o| x \cdot \nu.
   \end{equation}   Note for the boundary term we have used the fact that $ u_o, v_o \ge 0$ in $ \Omega$.  Adding (\ref{first}) and (\ref{second}) and using (\ref{three}) gives

   \begin{eqnarray} \label{four}
   (N-2) \int_\Omega \nabla u_o \cdot \nabla v_o & \le &   \lambda N \int_\Omega e^{v_{\lambda,\sigma}} ( e^{v_o} - v_o-1) \nonumber \\
   && + \lambda \int_\Omega e^{v_{\lambda,\sigma}} ( x \cdot \nabla v_{\lambda,\sigma}) ( e^{v_o} -v_o -1) \nonumber \\
    && + \lambda N \sigma \int_\Omega e^{u_{\lambda,\sigma}} ( e^{u_o} - u_o-1) \nonumber \\
   && + \lambda \sigma \int_\Omega e^{u_{\lambda,\sigma}} ( x \cdot \nabla u_{\lambda,\sigma}) ( e^{u_o} -u_o -1).
   \end{eqnarray}

      Now we know that $ -\Delta u_o, -\Delta v_o \ge 0$  and we also have $ \sigma u_o \le v_o \le u_o$.  From this we see that

   \begin{equation} \label{five}
   \int_\Omega \nabla u_o \cdot \nabla v_o = \int_\Omega (-\Delta u_o) v_o \ge \sigma \int_\Omega | \nabla u_o|^2 \ge \sigma \lambda_1(\Omega) \int_\Omega u_o^2,
   \end{equation}   and similarly one shows

   \begin{equation} \label{six}
   \int_\Omega \nabla u_o \cdot \nabla v_o  \ge \lambda_1(\Omega) \int_\Omega v_o^2,
   \end{equation}   where $ \lambda_1(\Omega)$ denotes the first eigenvalue of $ -\Delta $  in $ H_0^1(\Omega)$.   Using (\ref{bare}) and (\ref{poo}) one also sees that

      \begin{equation} \label{seven}
   \lambda \int_\Omega e^{v_\lambda} (e^{v_o} -1) v_o = \int_\Omega \nabla u_o \cdot \nabla v_o = \lambda \sigma \int_\Omega e^{u_\lambda} (e^{u_o}-1) u_o.
   \end{equation}

   The idea is to now break the left hand side of (\ref{four}) into four equal parts and use
   (\ref{five}), (\ref{six}) and (\ref{seven}) to rewrite (\ref{four}). We now take   $ 0< \lambda <\lambda_1$ sufficiently small such that   $ e^{u_{\lambda,\sigma}}, e^{v_{\lambda,\sigma}}<2$ for all $ 0 < \sigma <1$.    Doing this we obtain an inequality of the form
   \begin{equation} \label{nine}
    \int_\Omega \sigma  \left\{ \frac{ u_o^2}{\lambda} + (e^{u_o}-1)u_o - C(u^{u_o}-u_o-1) \right\}  +  \left\{\frac{v_o^2}{ \lambda} + ( e^{v_o} -1)v_o - C( e^{v_o}-v_0-1) \right\} dx  \le 0
   \end{equation} where $ C=C(N)>0$.  One easily sees that for $ 0 < \lambda $ sufficiently small that the integrand in (\ref{nine}) is positive on $ \{ ( u_o,v_o): u_o, v_o \ge 0\} \backslash \{(0,0)\}$.     Hence we have $ u_o=v_o=0$ and so $(u,v)=(u_{\lambda,\sigma}, v_{\lambda,\sigma})$. \\

 \hfill $ \Box$

\textbf{Proof of Theorem \ref{ext_uniq}:}     1. We first show that the extremal solution is a weak solution.  Let $ (u^*,v^*)$ denote the extremal solution corresponding to the parameters $ (\lambda^*, \gamma^*)$.    Using the techniques from \cite{Mont} one sees that $ (u^*,v^*)$ is a weak solution of $(P)_{\lambda^*,\gamma^*}$ except for possibly the integrability conditions.   To obtain these we obtain estimates on the minimal solutions along the ray through origin and through $ (\lambda^*,\gamma^*)$.  
Let $ (\lambda,\gamma)$ lie on this ray  and let  $(u,v)$ denote the minimal solution of $(P)_{\lambda,\gamma}$.  Multiply $ -\Delta u = \lambda f(v)$ by $ -x \cdot \nabla v$ and $ -\Delta v = \gamma g(u)$ by $ -x \cdot \nabla u$ and add the inequalities and integrate over $ \Omega$ to arrive at
\[ \int_\Omega x \cdot \nabla v \Delta u +  x \cdot \nabla u \Delta v = \int_\Omega \lambda f(v) (-x \cdot \nabla v) + \gamma g(u) (-x \cdot \nabla u),\]
and arguing as in (\ref{first}), (\ref{second}) and (\ref{three}) one sees that we have
\[ (N-2) \int_\Omega \nabla u \cdot \nabla v \le \lambda N \int_\Omega F(v) + \gamma N \int_\Omega G(u).\]  Now using the equation for $(u,v)$ we see that
\[ \int_\Omega \frac{\lambda (N-2)}{2} f(v) v + \frac{\gamma(N-2)}{2} g(u) u \le \int_\Omega \lambda N F(v) + \gamma N G(u).\]  From Lemma   \ref{log_conv}  we see that $ f(t) t $ dominates $F(t)$ for $ t$ near $ \infty$ (resp. near $1$)  in the case where $ f$ satisfies (R) and is log convex (resp. $f$ satisfies (S)).   One has the same for $ g$ and $G$.   From this we conclude that we have uniform bounds on $ \int_\Omega f(v) v$ and $ \int_\Omega g(u)u$ along the given ray and so passing to limits we have   $ v^* f(v^*),  u^* g(u^*) \in L^1(\Omega)$ and we also have the desired $H^1_0(\Omega)$ bound.     

We now show that the extremal solution is the unique solution.   Assume that $ 0 < \sigma < \infty$ is such that $ \gamma^*=\sigma \lambda^*$ and that $ (u,v)$ is a second weak solution of  $(P)_{\lambda^*,\gamma^*}$.  For simplicity we assume that $ \lambda^*=1$.  By the minimality of the extremal solution we see that $ (u,v) \ge (u^*,v^*)$ a.e. in $ \Omega$ and we have that $ u \neq u^*$ and $ v \neq v^*$.  We now assume that $ f$ and $ g$ satisfy (R) and are log convex.  
  Define
\[ z_1:= \frac{u^*+u}{2}, \quad z_2:= \frac{v^*+v}{2},  \] and note that $ z_1$ and $ z_2$ are weak solutions of
\[ -\Delta z_1 = f(z_2) + h_1(x), \qquad -\Delta z_2 = \sigma g(z_1) + \sigma h_2(x),\qquad \Omega,\] with $ z_1=z_2=0 $ on $ \pOm$ where
we define $h_i$ in a  moment.  \\
 Define $ \Omega_1=\{ x \in \Omega: v(x),v^*(x),u(x),u^*(x) \in \IR\}$  and note that $ \Omega \backslash \Omega_1$ is a set of measure zero.     We define
\[ h_1(x)= \frac{f(v^*) + f(v)}{2} - f( \frac{v^*+v}{2}) \quad x \in \Omega_1,\]
\[ h_2(x)= \frac{g(u^*)+ g(u)}{2} - g( \frac{u^*+u}{2})  \quad x \in \Omega_1,\]  and we set both to be zero otherwise.
 Note that since $f$ and $g$ are convex we have that $ 0 \le h_i $ a.e. in $ \Omega$ and since $(u,v)$ and $(u^*,v^*)$ are weak solutions we have $ h_i \in L^1(\Omega)$.     Since $f$ and $g$ are strictly convex (either by hypothesis or by Lemma \ref{log_conv}, 6)  and so we have $h_i$ different from zero on a set of positive measure.    Let $ \chi_i$ be weak solutions of $ -\Delta \chi_1 = h_1$ and $- \Delta \chi_2 = \sigma h_2$ in $ \Omega$ with zero boundary conditions and let $ -\Delta \phi = 1$ in $ \Omega$ with $ \phi=0$ on $ \pOm$.  By Hopf's Lemma there is some small $ \E>0$ such that $ \chi_1 \ge \E \phi$ and $ \chi_2 \ge \E \sigma \phi$ in $ \Omega$.   We now set
 \[ \tau_1:= z_1 + \E \phi -\chi_1, \qquad \tau_2:= z_2 + \sigma \E \phi - \chi_2,\] and note that $ \tau_i \le z_i$ in $ \Omega$.  A computation shows that  $ \tau_1$ and $ \tau_2$ are weak solutions of $ -\Delta \tau_1 = f(z_2) + \E$ in $ \Omega$ and $ -\Delta \tau_2 = \sigma( g(z_1)+\E)$ in $ \Omega$ with $ \tau_i=0$ on $ \pOm$.   Since $ z_i \ge \tau_i$ one sees that $ \tau_i$ are weak supersolutions, in a suitable sense (see the proof of the claim), of $ -\Delta \tau_1 \ge f(\tau_2) +\E$ in $ \Omega$ and $ -\Delta \tau_2 \ge \sigma( g(\tau_1)+\E)$ in $ \Omega$ with $ \tau_i=0$ on $ \pOm$.  We now use the following claim which we prove in a moment.  \\
   \textbf{Claim:}  There exists $ 0 \le w_i$ smooth such that
 \[ -\Delta w_1 = f(w_2) + \frac{\E}{2}, \qquad -\Delta w_2 = \sigma( g(w_1) + \frac{\E}{2}) \qquad \Omega, \] with $ w_i=0$ on $ \pOm$.   Let $ w_i$ be as in the claim and   pick $ \alpha >0$ but sufficiently small such that $ \alpha w_1 \le \frac{\E \phi}{2}$ and $ \alpha w_2 \le \frac{\sigma \E \phi}{2}$ in $\Omega$, which is not an issue since $ w_i$ is smooth.  Set $ \overline{w_1}= w_1 + \alpha w_1 - \frac{\E \phi}{2}$ and $\overline{w_2}= w_2+ \alpha w_2 - \frac{\sigma \E \phi}{2}$.   Note that $ \overline{w_i} \le w_i$ in $ \Omega$ and also note that a computation shows that 
\[ -\Delta \overline{w_1} \ge (1+\alpha) f( \overline{w_2}), \qquad -\Delta \overline{w_2} \ge (1+\alpha) \sigma g( \overline{w_1}) \qquad \Omega,\] where $ \overline{w_i}=0 $ on $ \pOm$.    The maximum principle shows that $ \overline{w_i} \ge 0$.  Now one uses a standard iteration argument to obtain a bounded solution, which is smooth after applying standard elliptic regularity theory, to $(P)_{1+\alpha, \sigma(1+\alpha)}$ which contradicts the fact that we assumed $ \lambda^*=1$.  To finish the proof we need only prove the claim and for this we switch notation slightly so as to cut down on the indices.  Suppose that $ \E>0$  and we have $ 0 \le u_0,v_0 \in L^1(\Omega)$ are weak solutions of $ -\Delta u_0 = k_0(x)  $ and $ -\Delta v_0 = k_1(x)$ in $ \Omega$ with $ u_0=v_0 =0 $ on $ \pOm$ where $ 0 \le k_i \in L^1(\Omega)$ and $ k_0(x) \ge f(v_0)+\E$ and $ k_1(x) \ge \sigma\{ g(u_0)+\E\} $ in $ \Omega$.  (I am using this somewhat restrictive notion of a weak supersolution since this is sufficient for our needs).  To prove the claim we need to now show the existence of bounded solutions of $ -\Delta \tilde{u} = f(\tilde{v})+\frac{\E}{2}$ and $ -\Delta \tilde{v}= \sigma( g(\tilde{u})+ \frac{\E}{2})$ in $ \Omega$ with $ \tilde{u}=\tilde{v}=0$ on $ \pOm$.   Let $ 0 < \mu <1$ be as promised from Lemma \ref{log_conv}, 1  and then let $ k$ be from 1 (ii) of the same lemma. We let $ u_i$ and $v_i$ for $ i=1,2,3$ denote  weak solutions   of
\[ -\Delta u_1 = \mu( f(v_0)+\E), \qquad -\Delta v_1 = \mu \sigma( g(u_0)+\E) \qquad \Omega, \]
\[ -\Delta u_2 = \mu( f(v_1)+\E), \qquad  -\Delta v_2 = \mu \sigma( g(u_1)+\E) \qquad \Omega, \]
\[  -\Delta u_3 = \mu( f(v_2)+\E), \qquad   -\Delta v_3 = \mu \sigma( g(u_2)+\E) \qquad \Omega, \] all with zero Dirichlet boundary conditions.  By the 
weak maximum principle we have that $ 0 \le u_3 \le u_2 \le u_1 \le \mu u_0$  and $ 0 \le v_3 \le v_2 \le v_1 \le \mu v_0$ in $ \Omega$.
 Let $ -\Delta \phi=1$ in $ \Omega$ with $ \phi=0$ on $ \pOm$.    Let $ T>0$ which we pick later.  Note that
\begin{eqnarray*}
-\Delta (u_1+T \phi) &=& T + \mu( f(v_0)+\E) \\
& \ge & T + \mu(  f( \frac{ v_1}{\mu})+\E) \\
& \ge & T +    \mu ( N f(v_1) -k+\E ) \\
&=& T+ \mu \E - \mu k - N \E \mu + N( \mu f(v_1) + \mu \E) \\
&=& T+ \mu \E - \mu k - N \E \mu + N (-\Delta u_2)
\end{eqnarray*}  and so if we take $ T $ big enough such that $ T+ \mu \E - \mu k - N \E \mu  \ge 0$ then we have that $N u_2 \le u_1 + T \phi$ in $\Omega$.  A similar calculation shows that
\[ -\Delta (v_1 + T \phi) \ge T + \mu \sigma \E - \mu \sigma k - N \mu \sigma \E + N ( -\Delta v_2) \qquad \Omega, \]  and so by taking $ T$ larger if necessary we also have that $ N v_2 \le v_1 + T \phi $ in $ \Omega$.    Now since $ f$ and $g$ are log convex we can write $ f(t) = e^{\gamma_1(t)}$ and $ g(t)= e^{\gamma_2(t)}$ where $ \gamma_i$ is convex and increasing with $ \gamma_i(0)=0$.   So we have that
\[ f(v_2) \le e^{\gamma_1( \frac{v_1+T \phi}{N})},\]  and note that
\begin{eqnarray*}
\gamma_1( \frac{v_1+T\phi}{N}) &=& \gamma_1(  \frac{1}{N} v_1 + (1 - \frac{1}{N}) \frac{T \phi}{(N-1)}) \\
& \le & \frac{\gamma_1(v_1)}{N} + ( 1 - \frac{1}{N}) \gamma_1( \frac{T \phi}{(N-1)})
\end{eqnarray*}  and from this we obtain that
\[ f(v_2)^N \le e^{\gamma_1(v_1)} e^{(N-1) \gamma_1( \frac{T \phi}{N-1})} \le f(v_0) e^{(N-1) \gamma_1( \frac{T \phi}{N-1})},\]  which shows that $ f(v_2) \in L^N(\Omega)$.  A similar calculation shows that $ g(u_2) \in L^N(\Omega)$ and hence by elliptic regularity theory we have that $ u_3,v_3 $ are bounded and note that since $ u_3 \le u_2$ and $ v_3 \le v_2$ we see that they satisfy $ -\Delta u_3 \ge \mu (f(v_3)+\E) $ and $ -\Delta v_3 \ge \sigma \mu ( g(u_3) +\E) $ in $ \Omega$ and we can apply a standard iteration argument to obtain smooth solutions 
$ \underline{u}$ and $ \underline{v}$ of $ -\Delta \underline{u} = \mu( f(\underline{v})+\E)$ and $ -\Delta \underline{v} = \sigma \mu ( g(\underline{u})+\E)$ in $ \Omega$ with $ \underline{u}=\underline{v}=0$ on $ \pOm$.  We now set $ \delta_1 = \mu \underline{u}$ and $ \delta_2 = \mu \underline{v}$. Then a computation shows that
\[ -\Delta \delta_1 = \mu^2( f(\underline{v})+\E) = \mu^2( f( \frac{\delta_2}{\mu}) +\E) \ge f(\delta_2)+\frac{\E}{2}\qquad \Omega,\] where the last inequality follows from Lemma \ref{log_conv}, 1.  A similar calculation shows that
\[ -\Delta \delta_2 \ge \sigma( g(\delta_1)+ \frac{\E}{2}) \qquad \Omega, \]  and we now obtained the desired result after a standard iteration argument.     \\
We now assume that $ f$ and $ g$ satisfy (S). Everything carries through as in the previous case except for the proof of the Claim.   Suppose that we have weak supersolutions $(u,v)$ of  
\[ -\Delta u \ge f(v) +\E, \qquad -\Delta v \ge \sigma( g(u) +\E) \qquad \Omega,\]  with  Dirichlet boundary conditions.     Let $ 0 < \mu <1$ be as promised from Lemma \ref{log_conv}, 5.  Set $ \overline{u}= \mu u$ and $ \overline{v}= \mu v$.   The first thing to notice is that $ u,v \le \mu $ a.e. in $ \Omega$.  A computation and Lemma \ref{log_conv}, 5,  show that $(\overline{u},\overline{v})$ is a weak supersolution (but bounded away from $1$) of 
\[ -\Delta \overline{u} \ge f(\overline{v}) + \frac{\E}{2}, \qquad -\Delta \overline{v} \ge \sigma \{ g(\overline{u}) + \frac{\E}{2} \}, \qquad \Omega\]  and so we can now apply a monotone iteration to obtain the desired result.

2.  It is known that $ u^*$ is a weak solution of $(N)_{\lambda^*}$ see \cite{BG}.  In fact the extremal solution enjoys the added regularity, $ f(u^*) \in L^2(\Omega)$, see \cite{craig1}.   We now show that $ u^*$ is the unique weak solution of $(N)_{\lambda^*}$ and the proof is very similar to the proof of 1 so we will be somewhat brief.  If no boundary conditions are given then it is understood they are Navier.   We suppose that $ u$ is a second weak solution of $(N)_{\lambda^*}$ and for simplicity we assume that $ \lambda^*=1$.  By the minimality of $ u^*$ we have that $  u^* \le u $ a.e. in $ \Omega$ and they differ on a set of positive measure.  Set $ z:= \frac{u^*+u}{2}$ and note that $ z$ is a weak solution of $ \Delta^2 z = f(z) + h(x) $ in $ \Omega$ where $ h(x)=2^{-1} \{ f(u^*) + f(u) \} - f(z)$ on $ \Omega_0:=\{ x \in \Omega:  u^*(x),u(x) \in \IR\}$ and where $ h(x)=0$ otherwise.  Again we have $ | \Omega \backslash \Omega_0 |=0$ and note that $ 0 \le h$ on $ \Omega$.    Since $f$ is strictly convex we have that $ h$ is positive on set of positive measure.   Let $ \chi$ be a weak solution of $ \Delta^2 \chi =h$ in $ \Omega$ and $ \Delta^2 \phi =1$ in $ \Omega$.  By Hopf's lemma (smooth out $h$ if necessary) there is some $ \E>0$ such that $ -\Delta( \chi -\E \phi) \ge 0$ in $ \Omega$ and so the maximum principle shows that $ \chi \ge \E \phi$ in $ \Omega$. 
 Set $ \tau := z +\E \phi - \chi$ and note that $ \tau \le z$ a.e. in $ \Omega$.  Also note that $ \tau$ is a weak solution of  $ \Delta^2 \tau = f(z) +\E$ in $ \Omega$ and so $ \tau \ge 0$ a.e. in  $\Omega$.  Since $ z \ge \tau$ we see that $ \tau $ is a weak supersolution of $ \Delta^2 \tau \ge f( \tau) +\E$ in $ \Omega$ (here we are using the analogous notion of a weak supersolution as in 2).    \\
 Claim: there exists some smooth function $ 0 < w$ such that $ \Delta^2 w = f(w)+ \frac{\E}{2}$ in $ \Omega$.  \\
 Take $ \alpha >0$ but small enough such that $ \alpha w \le \frac{\E \phi}{2}$ in $ \Omega$.   Set $ \overline{w}=w+ \alpha w - \frac{\E \phi}{2}$ and note that $ \overline{w} \le w$.  Then we see that $ \overline{w}$ satisfies $ \Delta^2 \overline{w} \ge (1+\alpha) f( \overline{w})$ in $ \Omega$ and so $ \overline{w} \ge 0$ in $ \Omega$.   Using the usual iteration argument shows the existence of a smooth solution to $ \Delta^2 \tilde{w}= (1+\alpha) f( \tilde{w})$ in $ \Omega$ which contradicts the fact  that $ \lambda^*=1$.    The only thing left to show is the claim.  The proof is very similar to the proof of the analogous claim in 1, so we omit the details. 

\hfill $ \Box$


\begin{thebibliography}{99}




\bibitem{A0}  G. Arioli, F. Gazzola, H.-C. Grunau, and E. Mitidieri, \emph{A semilinear fourth order elliptic problem
with exponential nonlinearity}, SIAM J. Math. Anal., 36 (2005), pp. 1226–1258.

\bibitem{BG} E. Berchi and F.  Gazzola, \emph{Some remarks on biharmonic elliptic problems with positive, increasing and convex nonlinearities}, Electronic Journal of Differential Equations, Vol. 2005(2005), No. 34, pp. 1–20.

\bibitem{BOG} T. Boggio, \emph{Sulle funzioni di Green d'ordine m}, Rend. Circ. Mat. Palermo (1905), 97-135.



\bibitem{bcmr} H. Brezis, T. Cazenave, Y. Martel, A. Ramiandrisoa; \emph{Blow up for $u_t - \Delta u = g(u)$ revisited}, Adv. Diff. Eq., 1 (1996) 73-90.  





\bibitem{BV} H. Brezis and L. Vazquez, \emph{Blow-up solutions of some
nonlinear elliptic problems}, Rev. Mat. Univ. Complut. Madrid 10 (1997),
no. 2, 443--469.



\bibitem{Cabre} X. Cabr\'e, \emph{Regularity of minimizers of semilinear elliptic problems up to dimension four}, Comm. Pure Appl. Math. 63 (2010), no. 10, 1362-1380.

\bibitem{CC} X. Cabr\'e and A. Capella, \emph{Regularity of radial minimizers
and extremal solutions of semilinear elliptic equations},  J. Funct. Anal.
238  (2006),  no. 2, 709--733.


\bibitem{cas} D. Cassani, J.M. do O and N. Ghoussoub, \emph{On a fourth order elliptic problem with a singular
nonlinearity.} Adv. Nonlinear Stud. 9 (2009), no. 1, 177-197.





\bibitem{moroc} L.B. Chaabane, \emph{On the extremal solutions of semilinear elliptic problems}, Abstr. Appl. Anal. Volume 2005, Number 1 (2005), 1-9. 

	
\bibitem{craig0} C. Cowan,  \emph{Regularity of the extremal solutions in a Gelfand system problem},  Advanced Nonlinear Studies,  Vol. 11, No. 3 , p. 695	Aug., 2011. 



\bibitem{craig1} C. Cowan, P.  Esposito and N. Ghoussoub, \emph{Regularity of extremal solutions in fourth order nonlinear eigenvalue problems on general domains}.  Discrete Contin. Dyn. Syst.  28  (2010),  no. 3, 1033–1050.


\bibitem{extra} C. Cowan and M.  Fazly, \emph{Uniqueness of solutions for  elliptic problems  involving the square root of the Laplacian operator}.  Preprint (2011). 


\bibitem{CR} M.G. Crandall and P.H. Rabinowitz, \emph{Some continuation and
variation methods for positive solutions of nonlinear elliptic eigenvalue
problems}, Arch. Rat. Mech. Anal., 58 (1975), pp.207-218.

\bibitem{AGGM}  J. Davila, L. Dupaigne, I. Guerra and M. Montenegro,
\emph{Stable solutions for the bilaplacian with exponential nonlinearity},
SIAM J. Math. Anal. 39 (2007), 565-592.


\bibitem{A1} J. Dolbeault, R.  Stanczy, \emph{Non-existence and uniqueness results for supercritical semilinear elliptic equations}.  Ann. Henri Poincaré  10  (2010),  no. 7, 1311–1333,

\bibitem{A4} P. Esposito and N.  Ghoussoub, \emph{Uniqueness of solutions for an elliptic equation modeling MEMS.}  Methods Appl. Anal.  15  (2008),  no. 3, 341–353,

\bibitem{EGG} P. Esposito, N. Ghoussoub and Y. Guo, \emph{Compactness along
the branch of semi-stable and unstable solutions for an elliptic problem
with a singular nonlinearity},  Comm. Pure Appl. Math. {\bf 60}  (2007), 1731--1768.

\bibitem{GG} N. Ghoussoub and Y. Guo, \emph{On the partial differential
equations of electro MEMS devices: stationary case}, SIAM J. Math. Anal. 38
(2007), 1423-1449.



\bibitem{taka} T. Hashimoto, \emph{Existence and nonexistence of nontrivial solutions of some nonlinear
fourth order elliptic equations}  Proceedings of the fourth international conference on dynamical systems and
differential equaions,  May 24 – 27, 2002, Wilmington, NC, USA, pp 393-402.


\bibitem{chine} X. Luo, \emph{Uniqueness of weak extremal solution to biharmonic equation with logarithmically convex nonlinearities}, Journal of PDEs  23 (2010) 315-329.    




\bibitem{Martel} Y. Martel, \emph{Uniqueness of weak extremal solutions of nonlinear elliptic problems},
Houston J. Math. 23 (1997), no. 1, p. 161–168.


\bibitem{jmq} J. McGough, \emph{On solution continua of supercritical quasilinear elliptic problems}. 
Differential Integral Equations 7 (1994), no. 5-6, 1453–1471. 



\bibitem{A2} J. McGough, J.  Mortensen, \emph{Pohozaev obstructions on non-starlike domains}.  Calc. Var. Partial Differential Equations  18  (2003),  no. 2, 189–205.

\bibitem{A3} J. McGough, J.  Mortensen, C.  Rickett and G. Stubbendieck, \emph{Domain geometry and the Pohozaev identity.}  Electron. J. Differential Equations  2005, No. 32, 16 pp.




\bibitem{MP} F. Mignot, J-P. Puel; \emph{ Sur une classe de problemes
non lineaires avec non linearite positive, croissante, convexe}, Comm.
Partial Differential Equations 5 (1980) 791-836.



\bibitem{Mont} M. Montenegro, \emph{Minimal solutions for a class of elliptic systems},   Bull. London Math. Soc.\textbf{37} (2005) 405-416.



\bibitem{Nedev} G. Nedev, \emph{Regularity of the extremal
solution of semilinear elliptic equations,}  C. R. Acad. Sci. Paris SŽr. I
Math.  330  (2000),  no. 11, 997--1002.





\bibitem{shaaf} R. Schaaf, \emph{Uniqueness for semilinear elliptic problems: supercritical growth and domain geometry},
Adv. Differential Equations, 5:10–12 (2000), pp. 1201–1220.



\bibitem{schmitt} K. Schmitt, \emph{Positive solutions of semilinear elliptic boundary value problems}, Topological methods in differential equations and inclusions (Montreal, PQ, 1994), NATO Adv. Sci. Inst. Ser. C Math. Phys. Sci., vol. 472, Kluwer Academic Publishers, Dordrecht, 1995, pp. 447-500.






\end{thebibliography}
\end{document}